\newtheorem{theorem}{Theorem}[section]
\newtheorem{proposition}[theorem]{Proposition}
\newtheorem{corollary}[theorem]{Corollary}
\newtheorem{remark}[theorem]{Remark}
\newcommand{\N}{ {\mathbb N} }
\newcommand{\K}{ {\Bbbk} }
\newcommand{\e}{ {X} }
 \newcommand{\FL}{ {\mathbb L} }
\begin{document}

\title{one generator algebras over perfect fields}
\keywords{Algebras, Automorphism groups, idempotents}

\author{Mohamad \textsc{Maassarani} }
\maketitle
\begin{abstract}
For $R_1,R_2,R_3,\dots$ a family of non isomorphic rings (or algebras) having each only 2 idempotents ($1$ and $0$), we classify up to isomorphism the rings (or algebras) obtained by taking products of powers of the different $R_i$. We show that the automorphism groups of such rings (or algebras) split naturally into the product of wreath products $Aut( R_n)\wr \mathfrak{S}_{m_n} $ for different $n$. These results are applied to algebras generated by one element over a perfect field $\K$. Such algebra is either $\K[X]$ or a quotient of $\K[X]$. We show that in the later case the algebra is isomorphic to a finite product of the form $A=\prod (\FL_i[X]/(X^j))^{n_{i,j}}$, where the $\FL_i$ are non isomomorphic finite field extensions of $\K$ $($not isomophic as $\K$-algebras$)$, with restrictions on the numbers $n_{i,j}$ if $\K$ is finite. We classify these algebras up to isomorphism. We have also that the $\K$-algebra automorphism group of $A=\prod (\FL_i[X]/(X^j))^{n_{i,j}}$ splits naturally into the product of wreat products $Aut_\K(\FL_i[X]/(X^j) )\wr \mathfrak{S}_{n_{i,j}}$  ($Aut_\K(-)$ is for $\K$-algebra automorphism group). Finally, we prove that $Aut_\K(\FL_i[X]/(X^n) )$ is isomorphic to the semi-direct product $G_n(\FL_i)\rtimes Aut_\K(\FL_i)$  ($Aut_\K(-)$ is for $\K$-algebra automorphism group), where $G_n(\FL_i)\simeq Aut_{\FL_i}(\FL_i[X]/(X^n) )$ ($\FL_i$ algebra automorphism group) is an algebraic subgroup of invertible lower triangular matrices of dimension $(n-1)\times (n-1)$ with coefficients in $\FL_i$; the conjugate of a matrix $M\in G_n(\FL_i)$ by $\sigma \in Aut_\K(\FL_i)$ is the matrix obtained from $M$ by applying $\sigma$ to its coefficients.

\end{abstract}
\section*{Main results}
For $R_1,R_2,R_3,\dots$ a family of non isomorphic rings (or algebras) having each only 2 idempotents ($1$ and $0$), we show (section \ref{s1}) that  $A_{n_1,\dots,n_m}=R_1^{n_1}\times \cdots \times R_m^{n_m}$ (with $n_m\geq 1$)  is isomorphic to $A_{n_1',\dots,n_{m'}'}=R_1^{n_1'}\times \cdots \times R_{m'}^{n_{m'}'}$ (with $n_{m'}'\geq 1$) as rings (or algebras) then $m'=m$ and $n_i=n_i'$ for $i=1,\dots,m$.\\\\ In section \ref{s2}, we prove, under the same assumptions on $R_1,R_2,R_3,\dots$, that the group of ring (or algebra) automorphisms of $R_1^{n_1}\times \cdots \times R_m^{n_m}$ decomposes naturaly into the product of the wreath products $Aut( R_i)\wr \mathfrak{S}_{n_i}$ for $i\in \{1,\dots,m\}$ ($Aut(-)$ denotes the group of ring or algebra automorphisms). \\\\
Section \ref{s3} is devoted to $\K$-algebras generated by one element for $\K$ a perfect field. Such algebras are either isomorphic to $\K[X]$ either isomorphic to a quotient of $\K[X]$. We show that a quotient of $\K[X]$ is isomorphic to a finite propduct of the form $A=\prod (\FL_i[X]/(X^j))^{n_{i,j}}$, where the $\FL_i$ are non isomomorphic finite field extensions of $\K$ $($not isomophic as $\K$-algebras$)$, with restrictions on the numbers $n_{i,j}$ if $\K$ is finite. We classify these algebras up to isomorphism using results of section \ref{s1}. This is done in the first subsection of section \ref{s3}. In the second subsection, we consider the $\K$-algebra automorphism group $Aut_\K(A)$ of $A=\prod (\FL_i[X]/(X^j))^{n_{i,j}}$. Using results of section \ref{s2}, we get that $Aut_\K(A)$ splits naturally into the product of wreath products $Aut_\K(\FL_i[X]/(X^j) )\wr \mathfrak{S}_{n_{i,j}}$, where $Aut_\K(\FL_i[X]/(X^j))$ is the $\K$-algebra automorphism group of $\FL_i[X]/(X^j)$. We show that, for $\FL$ a finite field extension of $\K$, the group $Aut_\K(\FL[X]/(X^n))$ ($\K$-algebra automorphisms) is isomorphic to the semi-direct product $Aut_\FL(\FL[X]/(X^n)) \rtimes Aut_\K(\FL)$, where $Aut_\K(\FL)$ is the group of field automorphisms of $\FL$ fixing all the elements of $\K$ and $Aut_\FL(\FL[X]/(X^n))$ is the group of $\FL$-algebra automorphisms of $\FL[X]/(X^n)$. We also prove that $Aut_\FL(\FL[X]/(X^n))$ is isomorphic to an algebraic subgroup $G_n(\FL)$ of invertible lower triangular matrices of dimension $(n-1)\times (n-1)$ with coefficients in $\FL$. We hence have that $Aut_\K(\FL[X]/(X^n))\simeq G_n(\FL)\rtimes Aut_\K(\FL)$; the conjugate of a matrix $M\in G_n(\FL)$ by $\sigma \in Aut_\K(\FL)$ is the matrix obtained from $M$ by applying $\sigma$ to its coefficients.\\\\
We note that cyclic homology of one generator algebras is consider in the litterature (\cite{CC}, \cite{K}, \cite{CH}). 
 \tableofcontents

\section{Product of powers of non isomorphic rings or algebras with $2$ idempotents}\label{s1}
In this section $R_1,R_2,R_3,\dots$ is a family of non isomorphic rings (or algebras) having each only 2 idempotents ($1$ and $0$). We will study the rings (or algebras) $$A_{n_1,\dots,n_m}=R_1^{n_1}\times\cdots \times R_m^{n_m},$$ with $m\geq 1$ and $n_m \geq 1$.
 We will prove that If $A_{n_1,\dots,n_m}$ is isomorphic to $A_{n_1',\dots,n_{m'}'}$ then $m'=m$ and $n_i=n_i'$ for $i=1,\dots,m$. This is done by considering idempotents. Elements of this section are used in the next section.\\\\
 We fix an algebra $A_{n_1,\dots,n_m}=R_1^{n_1}\times \cdots \times R_m^{n_m},$ with $m\geq 1$ and $n_m \geq 1$.
 For $i\in \{1,\dots,m\}$ and $l\in\{1,\dots,n_i\}$, we denote by $1_{i,j}$ the unit of the $j$-th copy of $R_i $ in $A_{n_1,\dots,n_m}$ viewed as an element of $A_{n_1,\dots,n_m}$ ; for presicion $1_{i,j}$ is of the form $ (0,\dots,0,1,0,\dots,0)$. The elements $1_{i,j}$ we defined are idempotent elements of $A_{n_1,\dots,n_m}$ ($1_{i,j}^2=1_{i,j}$).

\begin{proposition}\label{decomp}
The idempotents of $A_{n_1,\dots,n_m}$ are the elements of the form : 
$$\overset{m}{\underset{i=1}{\sum}} \underset{j\in X_i}{\sum} 1_{i,j}, $$
where $X_i$ is a subset of $\{1,\dots,n_i\}$ for $i=1,\dots,m$.
\end{proposition}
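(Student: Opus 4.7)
The plan is a straightforward unpacking of the product structure, split into the two containments. First I would verify that every element of the stated form is an idempotent: since the $1_{i,j}$ are mutually orthogonal idempotents (the product of two distinct ones lies in different direct summands, hence vanishes, while each squares to itself), any sum $e = \sum_{i,j \in X_i} 1_{i,j}$ satisfies $e^2 = \sum 1_{i,j}^2 = e$ by distributivity. This direction is purely formal.

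The reverse inclusion is the content. I would use the fact that an element of $A_{n_1,\dots,n_m}$ is, tautologically, a tuple $(r_{i,j})_{i,j}$ with $r_{i,j}\in R_i$, and that multiplication in a direct product is componentwise. Hence $(r_{i,j})^2 = (r_{i,j})$ if and only if $r_{i,j}^2 = r_{i,j}$ in $R_i$ for every pair $(i,j)$, i.e.\ each coordinate is an idempotent of its respective factor $R_i$. At this point the standing hypothesis on the $R_i$ enters: each $R_i$ has exactly two idempotents, $0$ and $1$, so every coordinate $r_{i,j}$ is either $0_{R_i}$ or $1_{R_i}$.

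Setting $X_i = \{\, j \in \{1,\dots,n_i\} \mid r_{i,j} = 1_{R_i}\,\}$ for each $i$, the given idempotent is then precisely $\sum_{i=1}^m \sum_{j \in X_i} 1_{i,j}$, which completes the proof. The only genuine ingredient here is the hypothesis that the $R_i$ have no nontrivial idempotents; everything else is bookkeeping about direct products. There is no real obstacle, and the argument generalizes verbatim to rings and to $\K$-algebras, since the componentwise description of multiplication is the same in both settings.
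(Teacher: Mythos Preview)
Your proof is correct and follows essentially the same approach as the paper: both arguments reduce to the componentwise description of multiplication in the product and then invoke the hypothesis that each $R_i$ has only $0$ and $1$ as idempotents. The only difference is that you explicitly verify the forward inclusion via orthogonality of the $1_{i,j}$, whereas the paper handles both directions at once with an ``if and only if'' on the coordinates.
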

\begin{proof}
Le $a=(a_1,\dots , a_{n_1+\dots+n_m})$ be an element of $A_{n_1,\dots,n_m}$. We have $a^2=(a_1^2,\dots , a_{n_1+\dots+n_m}^2)$. Hence, $a$ is idempotent if and only if each $a_i$ is idempotent in the corresponding $R_j$. But the only idempotents of $R_j$ are $1$ and $0$. This proves the proposition.
\end{proof}

\begin{corollary}\label{num idem}
The number of idempotents of $ A_{n_1,\dots,n_m}$ is $2^{n_1+\cdots+n_m}$.
\end{corollary}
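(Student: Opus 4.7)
The plan is to deduce this count directly from Proposition \ref{decomp} by setting up a bijection between idempotents of $A_{n_1,\dots,n_m}$ and families of subsets $(X_1,\dots,X_m)$ with $X_i \subseteq \{1,\dots,n_i\}$, and then counting these families.

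First I would invoke Proposition \ref{decomp} to write every idempotent as $e(X_1,\dots,X_m) := \sum_{i=1}^m \sum_{j \in X_i} 1_{i,j}$ for some choice of subsets $X_i \subseteq \{1,\dots,n_i\}$; this gives surjectivity of the map $(X_1,\dots,X_m) \mapsto e(X_1,\dots,X_m)$ from tuples of subsets onto the set of idempotents. Next I would verify injectivity: since $1_{i,j}$ is, by construction, the element of $A_{n_1,\dots,n_m}$ whose components are all zero except for a $1$ in position $(i,j)$, the tuple $(X_1,\dots,X_m)$ is uniquely recovered from $e(X_1,\dots,X_m)$ by reading which coordinates equal $1$. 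Hence the map is a bijection.

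Finally I would perform the count: there are $2^{n_i}$ subsets of $\{1,\dots,n_i\}$, so the total number of tuples $(X_1,\dots,X_m)$ is
\[
\prod_{i=1}^m 2^{n_i} = 2^{n_1+\cdots+n_m},
\]
which yields the desired formula.

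There is no real obstacle here: the corollary is a purely combinatorial consequence of the proposition, with the only mild subtlety being the observation that distinct tuples of subsets produce distinct idempotents, which is immediate from the coordinate description of the $1_{i,j}$.
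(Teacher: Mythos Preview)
Your proof is correct and is precisely the intended argument: the paper states the corollary without proof as an immediate consequence of Proposition~\ref{decomp}, and your bijection-and-count spelling out of that consequence is exactly what is implicit there.
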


For $a\in A_{n_1,\dots,n_m}$ we define the abelian group $E_1(a)$ by : $$E_1(a)=\{x \in A_{n_1,\dots,n_m}\mid ax=x \}.$$

\begin{proposition}\label{E1} \quad
\baselineskip 18 pt
\begin{itemize}
\item[1)] For $i \in \{1,\dots,m\}$ and $j\in \{1,\dots,n_i\}$: $$E_1(1_{i,j})=(R_i)_j,$$ where $(R_i)_j$ denotes the $j$-th copy of $R_i$ in the product of rings (or algebras) $A_{n_1,\dots,n_m}$.\\
\item[2)] For $i=1,\dots,m$ let $X_i$ be a subset of $\{1,\dots,n_i\}$. $$E_1(\overset{m}{\underset{i=1}{\sum}} \underset{j\in X_i}{\sum} 1_{i,j}) \simeq \overset{m}{\underset{i=1}{\prod}} R_i^{\vert X_i\vert},$$ where $\vert X_i\vert$ is the cardinal of $X_i$. 
\end{itemize}
\end{proposition}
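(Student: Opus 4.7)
For part 1), the plan is a direct componentwise computation. Writing any $x \in A_{n_1,\dots,n_m}$ as a tuple indexed by pairs $(k,\ell)$ with $k\in\{1,\dots,m\}$ and $\ell\in\{1,\dots,n_k\}$, the idempotent $1_{i,j}$ has a $1$ in position $(i,j)$ and $0$ elsewhere, so the coordinates of $1_{i,j}\cdot x$ all vanish except at position $(i,j)$, where the value equals $x_{i,j}$. Thus the equation $1_{i,j}\cdot x = x$ is equivalent to the condition that $x_{k,\ell}=0$ for every $(k,\ell)\neq (i,j)$, which is exactly the statement that $x$ lies in the $j$-th copy $(R_i)_j$ of $R_i$ inside $A_{n_1,\dots,n_m}$. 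Both inclusions are immediate from this.

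For part 2), I would set $e=\sum_{i=1}^{m}\sum_{j\in X_i}1_{i,j}$ and use the same componentwise analysis. Since the $1_{i,j}$ appearing in the sum are supported on distinct coordinates they are mutually orthogonal, so $e^2=e$ and for any $x$ the product $e\cdot x$ coincides with $x$ on coordinates $(i,j)$ with $j\in X_i$ and is $0$ elsewhere. Hence $e\cdot x = x$ if and only if $x$ is supported on the set $S=\{(i,j)\mid 1\le i\le m,\; j\in X_i\}$. The projection $A_{n_1,\dots,n_m}\to \prod_{(i,j)\in S}(R_i)_j$ restricts to a bijection on the subset of elements supported on $S$; combined with part 1) this gives
\[
E_1(e) \;=\; \bigoplus_{i=1}^{m}\bigoplus_{j\in X_i}(R_i)_j \;\simeq\; \prod_{i=1}^{m}\prod_{j\in X_i}(R_i)_j \;\simeq\; \prod_{i=1}^{m}R_i^{|X_i|},
\]
and this isomorphism respects both the additive and, since $e$ is a (possibly non-unital) idempotent with $(ex_1)(ex_2)=e(x_1x_2)$, the multiplicative structure.

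There is no real obstacle: the whole statement is a formal consequence of the product decomposition together with the componentwise formula for multiplication by an idempotent. The only point to be slightly careful about is that $e$ need not equal $1_{A_{n_1,\dots,n_m}}$, so $E_1(e)$ is a subring with its own unit $e$ rather than a unital subring of $A_{n_1,\dots,n_m}$; this is exactly what is needed to match the product $\prod_i R_i^{|X_i|}$ on the right-hand side.
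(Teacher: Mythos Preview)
Your proof is correct and follows exactly the direct componentwise verification the paper has in mind; the paper's own proof is simply the one-line remark ``This can be readily checked,'' so your argument is just a careful unpacking of that. Your observation about $E_1(e)$ having unit $e$ rather than $1_{A}$ is a nice clarification that the paper leaves implicit.
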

\begin{proof}
This can be readly checked.
\end{proof}

\begin{remark}
For an idempotent $a\in A_{n_1,\dots,n_m}$ the abelian group $E_1(a)$ is a ring (or an algebra).
\end{remark}
 
\begin{proposition}\label{idem}
Let $a\in  A_{n_1,\dots,n_m}$ be an idempotent. Fix $i$ in $\{1,\dots,m\}$ and $j$ in $\{1,\dots,n_i\}$. There is a multiplicative and additive (linear if we have an agebra)  isomorphism between $E_1(a)$ and $E_1(1_{i,j})$ if and only if $a=1_{i,k}$ for a given $k\in\{1,\dots,n_i\}$. 
\end{proposition}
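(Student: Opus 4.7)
The plan is to combine Proposition \ref{E1} (which identifies $E_1$ of any idempotent as a product of $R_l$'s) with Corollary \ref{num idem} (which counts idempotents) and the non-isomorphism hypothesis on the family $R_1,R_2,\dots$

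First I would dispose of the ``if'' direction. If $a = 1_{i,k}$, then by part 1) of Proposition \ref{E1} we have $E_1(a) = (R_i)_k$ and $E_1(1_{i,j}) = (R_i)_j$, and both are canonically isomorphic (as rings, or as algebras) to $R_i$ via the projection; this gives the desired isomorphism.

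For the ``only if'' direction I would start from the decomposition of $a$ provided by Proposition \ref{decomp}: write
$$a=\sum_{l=1}^m \sum_{j\in X_l} 1_{l,j}$$
for some subsets $X_l\subseteq \{1,\dots,n_l\}$. By part 2) of Proposition \ref{E1},
$$E_1(a)\simeq \prod_{l=1}^m R_l^{|X_l|},$$
while $E_1(1_{i,j})\simeq R_i$. The key observation is that any multiplicative-and-additive isomorphism sends idempotents bijectively to idempotents, so the two sides must have the same number of idempotents. Applying Corollary \ref{num idem} to the left side and using that $R_i$ has exactly $2$ idempotents on the right, we get $2^{\sum_l |X_l|}=2$, hence $\sum_l |X_l|=1$. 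Thus exactly one $X_{l_0}$ is a singleton $\{k\}$ and all others are empty, so $E_1(a)\simeq R_{l_0}\simeq R_i$. Since the $R_l$ are pairwise non-isomorphic, this forces $l_0=i$, and therefore $a=1_{i,k}$ as required.

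The only potential obstacle is checking that the idempotent count really transfers across the isomorphism; since the isomorphism is both additive and multiplicative it preserves the equation $x^2=x$, so the set of idempotents is mapped bijectively, and the argument goes through without further subtlety. No other step requires calculation beyond what has already been established in Propositions \ref{decomp} and \ref{E1}.
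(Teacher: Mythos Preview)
Your proof is correct and follows essentially the same approach as the paper: decompose $a$ via Proposition~\ref{decomp}, use Proposition~\ref{E1} and Corollary~\ref{num idem} to compare idempotent counts and force $\sum_l |X_l|=1$, then invoke the pairwise non-isomorphism of the $R_l$ to conclude $l_0=i$. The paper makes explicit one small point you leave implicit, namely that the additive--multiplicative bijection $R_{l_0}\to R_i$ is automatically a ring (or algebra) isomorphism because each side has $1$ as its unique nonzero idempotent, so the unit is preserved; but this is a minor detail and your argument already contains the ingredient needed to see it.
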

\begin{proof}
Let $a$ be and idempotent of $ A_{n_1,\dots,n_m}$ By proposition \ref{decomp} : 
\begin{equation}\label{1}
a=\overset{m}{\underset{i=1}{\sum}} \underset{j\in X_i}{\sum} 1_{i,j}, 
\end{equation}
where $X_i$ is a subset of $\{1,\dots,n_i\}$ for $i=1,\dots,m$. 
Applying proposition \ref{E1} and then corollary \ref{num idem} to $a$, we get that the number of idempotents in $E_1(a)$ is equal to $2^{\vert X_1\vert +\cdots \vert X_m \vert }$. By proposition \ref{E1} the ring $E_1(1_{i,j})$ is isomorphic to $R_i$. The ring (or algebra) $R_i$  has exactly two idempotents. In order to $E_1(1_{i,j})$ and $E_1(a)$ be isomorphic by an additive and multiplicative map, they should have the same number of idempotents and therefore $\vert X_i\vert +\cdots +\vert X_m \vert $ should be equal to $1$. This reduces the sum formula (\ref{1}) to $a=1_{r,k}$ for a given $(r,k)$ and we have $E_1(a)\simeq R_r$ by proposition \ref{E1}. Finally, $R_r$ and $R_i$ are isomorphic by an additive (linear if we have a algebra) multiplicative map (and therefore a ring or algebra isomorphism since $1$ is the only non trivial idempotent) if and only if $i=r$ and hence $a=1_{i,k}$ for a given $k$. This proves the proposition.
\end{proof}
\begin{proposition}\label{iso}
If $A=A_{n_1,\dots,n_m}$ is isomorphic to $A'=A_{n_1',\dots,n_{m'}'}$ as a ring (or algebra) then $m'=m$ and $n_i=n_i'$ for $i\in \{1,\dots,m\}$.
\end{proposition}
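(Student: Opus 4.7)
The plan is to transport Proposition \ref{idem} across an arbitrary isomorphism $\phi : A \to A'$ and read off both $m$ and the individual $n_i$ from the isomorphism class of the subring $E_1(a)$, which depends only on the abstract ring (or algebra) structure. The key functoriality I would first record is that, since $\phi(a)\phi(x)=\phi(x)$ iff $ax=x$, the map $\phi$ sends idempotents to idempotents and restricts to a ring (or algebra) isomorphism $E_1(a)\simeq E_1(\phi(a))$ for every idempotent $a\in A$.

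To show $m = m'$, I would apply this functoriality to $a=1_{m,1}$, yielding $E_1(\phi(1_{m,1}))\simeq R_m$. Combining Proposition \ref{decomp}, Proposition \ref{E1}, and Corollary \ref{num idem}, the only idempotents $b\in A'$ for which $E_1(b)$ contains exactly two idempotents are the single generators $1'_{r,k}$, and for these $E_1(b)\simeq R_r$. Hence $\phi(1_{m,1})=1'_{r,k}$ for a single pair $(r,k)$, and non-isomorphism within the family forces $r=m$. This gives $m\leq m'$; the reverse inequality is symmetric. To then pin down the $n_i$, I would fix $i\in\{1,\dots,m\}$ and count the idempotents $a\in A$ with $E_1(a)\simeq R_i$ as additive/multiplicative structures: by Proposition \ref{idem} this set equals $\{1_{i,1},\dots,1_{i,n_i}\}$, of cardinality $n_i$, and the analogous count in $A'$ gives $n_i'$. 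Since $\phi$ induces a bijection between these two sets, $n_i=n_i'$.

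The step requiring the most care is the passage, inside the $m=m'$ argument, from an abstract additive/multiplicative isomorphism between two $R_i$'s to an honest ring (or algebra) isomorphism; this is precisely where the hypothesis that each $R_i$ has only $0$ and $1$ as idempotents is essential, since it characterizes the unit as the unique non-zero idempotent and hence is preserved by any additive/multiplicative bijection.
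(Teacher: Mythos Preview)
Your proposal is correct and follows essentially the same approach as the paper: both transport Proposition~\ref{idem} across the isomorphism to see that each $1_{i,j}$ must land on some $1'_{i,k}$, and then read off $m$ and the $n_i$. The only cosmetic difference is that the paper obtains the inequalities $m'\geq m$, $n_i'\geq n_i$ from injectivity of $f$ and then symmetrizes via $f^{-1}$, whereas you phrase the same observation as a direct bijection between the sets $\{a: a^2=a,\ E_1(a)\simeq R_i\}$ on the two sides.
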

\begin{proof}
We denote by $1_{k,l}'$ the idempotent of $A'$ defined in similar fashion to the idempotent $1_{i,j}$ of $A$. Let $f:A\to A'$ be an isomorphism. The map $f$ gives an isomorphism between $E_1(1_{i,j})$ and $E_1(f(1_{i,j}))$. Since the image of an idempotent is an idempotent, one can argue as in the proof of proposition \ref{idem} to deduce that there should be an idempotent $1_{i,k_j}'\in A'$ image of $1_{i,j}$ by $f$. Moreover, since $f$ is injective, one should have at least $n_i$ elments of the form $1_{i,k_j}'$. This prooves that $m'\geq m$ and that $n_{i}'\geq n_i$ for $i=1,\dots,m$. Applying the same reasoning to $f^{-1}$ we get that $m\geq m'$ and that $n_{i}\geq n_i'$ for $i=1,\dots,m'$. Therefore, $m=m'$ and $n_i=n_i'$ for $i=1,\dots,m$.
\end{proof}

\section{Decomposition of the automorphism group}\label{s2}
As in the previous section $R_1,R_2,R_3,\dots$ is a family of non isomorphic rings (or algebras) having each only 2 idempotents ($1$ and $0$). We show, in this section, that the ring (or algbera) automorphism group of $$A_{n_1,\dots,n_m}=R_1^{n_1}\times \cdots \times R_m^{n_m},$$ with $m\geq 1$ and $n_m \geq 1$, decomposes naturallry  into the product of the wreath products $Aut( R_i)\wr \mathfrak{S}_{n_i} $ for $i\in \{1,\dots,m\}$, where $Aut(R_n)$ denotes the group of ring (or algebra) isomorphisms of $R_n$.  Elements of the previous section are used.\\\\
 We fix an algebra $A_{n_1,\dots,n_m}=R_1^{n_1}\times R_2^{n_2}\times \cdots \times R_m^{n_m},$ with $m\geq 1$ and $n_m \geq 1$. Throught this section, for $B$ a ring (or algebra), we denote by $Aut(B)$ the ring (or algebra) automorphism group of $B$.  

\begin{proposition}\label{per}
For any $ f\in Aut(A_{n_1,\dots,n_m})$ and $i\in\{1,\dots,m\}$ : 
\begin{itemize}
\item[1)] $f$ pemurtes isomorphically the $n_i$ factors isomorphic to $R_i$ in the product $A_{n_1,\dots,n_m}$.
\item[2)] $f$ stabilizes the factor $R_i^{n_i}$ and restricts to an automorphism $f_i :  R_i^{n_i}\to  R_i^{n_i} $.
\end{itemize}
\end{proposition}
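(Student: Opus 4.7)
The strategy is to track the images of the idempotents $1_{i,j}$ under $f$ and apply Proposition \ref{idem}. For each fixed pair $(i,j)$, the element $f(1_{i,j})$ is an idempotent of $A_{n_1,\dots,n_m}$, and because $f$ is a ring homomorphism one has $1_{i,j} x = x$ if and only if $f(1_{i,j}) f(x) = f(x)$; thus $f$ restricts to an additive and multiplicative bijection $E_1(1_{i,j}) \xrightarrow{\sim} E_1(f(1_{i,j}))$. Proposition \ref{idem} (read with the roles of source and target reversed) then forces $f(1_{i,j}) = 1_{i,\sigma_i(j)}$ for some index $\sigma_i(j) \in \{1,\dots,n_i\}$; crucially the first index $i$ is preserved, because otherwise $E_1(f(1_{i,j}))$ would be isomorphic to some $R_{i'}$ with $i' \neq i$, contradicting $R_i \not\simeq R_{i'}$.

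This produces, for every $i$, a function $\sigma_i : \{1,\dots,n_i\} \to \{1,\dots,n_i\}$. Since $f$ is injective and the idempotents $1_{i,1},\dots,1_{i,n_i}$ are pairwise distinct, $\sigma_i$ is injective, hence a permutation of $\{1,\dots,n_i\}$. Using Proposition \ref{E1} together with the identification $(R_i)_j = E_1(1_{i,j})$, the restriction $f|_{(R_i)_j}$ becomes an additive-multiplicative (and $\K$-linear in the algebra case) bijection onto $(R_i)_{\sigma_i(j)}$, which is exactly statement~(1).

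For part~(2), write $R_i^{n_i} = \bigoplus_{j=1}^{n_i} (R_i)_j$ inside $A_{n_1,\dots,n_m}$; by~(1) the map $f$ sends each summand $(R_i)_j$ into $R_i^{n_i}$, so $f(R_i^{n_i}) \subseteq R_i^{n_i}$. Applying the same reasoning to $f^{-1}$ yields the reverse inclusion, after which the restriction $f_i := f|_{R_i^{n_i}}$ is an automorphism of $R_i^{n_i}$.

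The main obstacle is the bookkeeping around Proposition \ref{idem}: one must verify that the induced bijection on $E_1$-sets is simultaneously additive and multiplicative (which is immediate from $f$ being a ring homomorphism and from $E_1(1_{i,j})$ being closed under the operations of $A_{n_1,\dots,n_m}$), and one must rule out that $1_{i,j}$ could map to a sum $\sum 1_{i',k}$ with more than one term or with $i' \neq i$; both exclusions are contained in the conclusion of Proposition \ref{idem}. Beyond this, the argument is essentially a transport-of-structure along $f$.
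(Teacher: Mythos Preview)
Your proof is correct and follows essentially the same approach as the paper: both track $f(1_{i,j})$, use the induced isomorphism $E_1(1_{i,j})\to E_1(f(1_{i,j}))$ together with Proposition~\ref{idem} to force $f(1_{i,j})=1_{i,k}$, and then deduce the permutation of the factors $(R_i)_j=E_1(1_{i,j})$. Your write-up is slightly more explicit (naming the permutation $\sigma_i$ and invoking $f^{-1}$ for part~(2)), but the argument is the same.
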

\begin{proof}
Let $f$ be an automorphism of $A_{n_1,\dots,n_m}$. We recall that for $a\in  A_{n_1,\dots,n_m}$,  $E_1(a)=\{x \in A_{n_1,\dots,n_m} \mid ax=x \}$.  Let $1_{i,j}$ be the idempotent defined in the previous section. $f(1_{i,j})$ is idempotent (image of idempotent) and $f$ restricts to a multiplicative additive (or linear) isomorphism $g: E_1(1_{ij})\to E_1(f(1_{i,j}))$. Hence, by proposition \ref{idem}, $f(1_{i,j})=1_{i,k}$ for a given $k\in\{1,\dots,n_i\}$. Since $f$ is injective, it will permute the elements $1_{i,j}$ for $j\in\{1,\dots,n_i\}$  and therefore it premutes (isomorphically) the subspaces $E_1(1_{i,j})$ for $j\in\{1,\dots,n_i\}$. The first statement of the proposition follows because $E_1(1_{i,j})=(R_i)_j$ the $j$-th copy of $R_i$ in the product $A_{n_1,\dots,n_m}$ (see proposition \ref{E1}). The second statement of the proposition follows from the first. 
\end{proof}

The second statement of the above proposition allows to construct a natural morphism by restricting $f$ :
$$
  \begin{aligned}
    {\Psi} : Aut(A_{n_1,\dots,n_m}) & \longrightarrow Aut(R_1^{n_1})\times \cdots \times Aut(R_m^{n_m})  \\ 
    f & \longmapsto (f_1,\dots,f_m)
  \end{aligned}
 $$
where $f_i$ are as in the proposition the restrictions of $f$.

\begin{proposition}\label{Psi}
The morphism $\Psi$ above is an isomorphism with inverse : 
 $$
  \begin{aligned}
    {\Phi} :Aut(R_1^{n_1})\times \cdots \times Aut(R_m^{n_m}) & \longrightarrow Aut(A_{n_1,\dots,n_m})  \\
     (f_1,\dots,f_m) & \longmapsto f_1\times \cdots \times f_m
  \end{aligned}.
 $$

\end{proposition}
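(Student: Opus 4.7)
The plan is to verify that $\Phi$ and $\Psi$ are mutually inverse group homomorphisms. First I would check that $\Phi$ is well-defined and a homomorphism: given automorphisms $f_i : R_i^{n_i}\to R_i^{n_i}$, the componentwise map $f_1\times\cdots\times f_m$ on $A_{n_1,\dots,n_m}=R_1^{n_1}\times\cdots\times R_m^{n_m}$ is an automorphism (its inverse is $f_1^{-1}\times\cdots\times f_m^{-1}$, and multiplicativity/linearity follows coordinatewise), and $\Phi$ respects composition because composition in a product of rings is also coordinatewise. That $\Psi$ is a homomorphism is immediate once we know $\Psi$ is well-defined, which is exactly the content of part~2 of Proposition~\ref{per}.

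Next I would show $\Psi\circ\Phi=\mathrm{id}$. Given $(f_1,\dots,f_m)$, the automorphism $\Phi(f_1,\dots,f_m)=f_1\times\cdots\times f_m$ visibly stabilizes each factor $R_i^{n_i}$ and its restriction to $R_i^{n_i}$ is $f_i$; thus $\Psi(\Phi(f_1,\dots,f_m))=(f_1,\dots,f_m)$ by definition of $\Psi$.

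For the other composition $\Phi\circ\Psi=\mathrm{id}$, let $f\in Aut(A_{n_1,\dots,n_m})$ and write $\Psi(f)=(f_1,\dots,f_m)$ with $f_i=f|_{R_i^{n_i}}$. By part~2 of Proposition~\ref{per}, $f$ sends the subring $R_i^{n_i}$ into itself, so for a general element $a=(a_1,\dots,a_m)\in R_1^{n_1}\times\cdots\times R_m^{n_m}$ written with respect to the direct product decomposition, we have $f(a)=f(a_1)+\cdots+f(a_m)=f_1(a_1)+\cdots+f_m(a_m)=(f_1\times\cdots\times f_m)(a)$, which gives $f=\Phi(\Psi(f))$.

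The only nontrivial ingredient is the fact that every automorphism of $A_{n_1,\dots,n_m}$ preserves the block decomposition $R_1^{n_1}\times\cdots\times R_m^{n_m}$, and this is already secured by Proposition~\ref{per}; once that is in hand, the proof reduces to the essentially formal verifications above, so no serious obstacle remains.
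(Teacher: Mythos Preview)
Your proof is correct and matches the paper's approach: the paper simply states that the claim ``can be readily checked,'' and you have supplied exactly the routine verifications (well-definedness of $\Phi$, and the two compositions $\Psi\circ\Phi$ and $\Phi\circ\Psi$ being identities) that this phrase encodes, relying on Proposition~\ref{per} for the only substantive input.
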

\begin{proof}
This can be readly checked.
\end{proof} 
 
We have decomposed naturally $Aut(A_{n_1,\dots,n_m}) $ into the product : 
$$ Aut(R_1^{n_1})\times \cdots \times Aut(R_m^{n_m}).$$
We will now decompose each $Aut(R_i^{n_i})$ for $i\in \{1,\dots,m\}$.\\\\
Let $n$ be a positive integer and $B$ a ring (or an algebra). For $\sigma \in \mathfrak{S}_n$  (symmetric group), we define the automorphism $f_\sigma \in Aut(B^n)$ by :
$$ f_\sigma(b_1,\dots,b_n)=(b_{\sigma^{-1}(1)}, \dots, b_{\sigma^{-1}(n)}),$$
for $b_1,\dots,b_n \in B$. We denote by $Aut(B) \wr\mathfrak{S}_n=(Aut(B))\rtimes \mathfrak{S}_n$ the semidirect product (wreath product) given by : 
$$ \sigma (f_1,\dots,f_n) =(f_{\sigma^{-1}(1)}, \dots, f_{\sigma^{-1}(n)})\sigma,$$
for $\sigma \in \mathfrak{S}_n$ and $f_1,\dots,f_n\in Aut(B)$.

\begin{proposition}
The map $\Theta : Aut(B) \wr \mathfrak{S}_n \to Aut(B^n)$ given by : 
$$ \Theta((f_1,\dots,f_n),\sigma))=(f_1\times \cdots \times f_n)\circ f_\sigma,$$
for $f_1,\dots , f_n\in Aut(B)$ and $\sigma \in \mathfrak{S}_n$ is an injective group morphism.
\end{proposition}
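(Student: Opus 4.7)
The plan is to verify the two required properties separately: the homomorphism identity $\Theta(xy)=\Theta(x)\Theta(y)$ and injectivity, both by direct computation on coordinates. First I would confirm that $\Theta$ lands in $Aut(B^n)$: since each $f_i$ is an automorphism of $B$, the map $f_1\times\cdots\times f_n$ is an automorphism of $B^n$, and $f_\sigma$ is clearly a ring (or algebra) automorphism of $B^n$, so their composition is one as well.

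The computational heart of the argument is the commutation identity
\[
f_\sigma\circ(g_1\times\cdots\times g_n)=(g_{\sigma^{-1}(1)}\times\cdots\times g_{\sigma^{-1}(n)})\circ f_\sigma,
\]
obtained by applying both sides to $(b_1,\dots,b_n)$ and reading off coordinate $i$. Assuming this and the evident relation $f_\sigma\circ f_\tau=f_{\sigma\tau}$, the homomorphism property follows by expanding $\Theta((f_1,\dots,f_n),\sigma)\circ \Theta((g_1,\dots,g_n),\tau)$ and matching it, coordinate by coordinate, with $\Theta$ evaluated on the wreath product of the two pairs, whose first coordinate is $(f_1 g_{\sigma^{-1}(1)},\dots,f_n g_{\sigma^{-1}(n)})$ and whose second is $\sigma\tau$. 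This is the only step with any genuine bookkeeping, and it is the place where a sign/convention error would most likely slip in — this is what I expect to be the main obstacle, though it is only a careful indexing check.

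For injectivity, suppose $\Theta((f_1,\dots,f_n),\sigma)=\operatorname{id}_{B^n}$ and test this on the idempotents $e_j=(0,\dots,0,1,0,\dots,0)$ with the $1$ in position $j$. A direct calculation gives $f_\sigma(e_j)=e_{\sigma(j)}$, hence $(f_1\times\cdots\times f_n)(e_{\sigma(j)})=e_{\sigma(j)}$, which must equal $e_j$; this forces $\sigma=\operatorname{id}$ and then $(f_1\times\cdots\times f_n)=\operatorname{id}$, so each $f_j=\operatorname{id}_B$. Thus the kernel is trivial and $\Theta$ is an injective group morphism, which is what is claimed.
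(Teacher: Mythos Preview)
Your proof is correct and follows essentially the same route as the paper: the homomorphism property is established via the commutation identity $f_\sigma\circ(g_1\times\cdots\times g_n)=(g_{\sigma^{-1}(1)}\times\cdots\times g_{\sigma^{-1}(n)})\circ f_\sigma$ together with $f_\sigma\circ f_\tau=f_{\sigma\tau}$, exactly as in the paper's computation. For injectivity the paper evaluates on arbitrary tuples $(b_1,\dots,b_n)$ and argues that $\sigma^{-1}(i)\neq i$ would force $f_i$ to be constant, whereas you test on the standard idempotents $e_j$ and read off $\sigma(j)=j$ directly; both are straightforward coordinate checks and amount to the same argument.
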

\begin{proof}
We first prove that $\Theta$ is a morphism. For $(f_1,\dots,f_n)$ and $(g_1,\dots, g_n) $ in $Aut(B)^n $, $\sigma$ and $\sigma'$ in $\mathfrak{S}_n$, we have : 
$$\begin{aligned}
\Theta(\ ((f_1,\dots,f_n), \sigma) ((g_1,\dots,g_n),\sigma ') \ ) =&\Theta(\ ((f_1\circ g_{\sigma^{-1}(1)},\dots,f_n\circ g_{\sigma^{-1}(n)}),\sigma\sigma ') \ )\\
=&(f_1\circ g_{\sigma^{-1}(1)}\times \cdots \times f_n\circ g_{\sigma^{-1}(n)}) \circ f_{\sigma\sigma '},
\end{aligned}$$
and 
$$\begin{aligned}
\Theta(\ ((f_1,\dots,f_n), \sigma) \ ) \circ \Theta(\ ((g_1,\dots,g_n),\sigma ') \ ) =&(f_1\times \cdots \times f_n)\circ f_\sigma \circ (g_1\times \cdots \times g_n)\circ f_{\sigma'}\\
=&(f_1\times \cdots \times f_n)\circ (f_\sigma \circ (g_1\times \cdots \times g_n) f_{\sigma^{-1}})\circ f_{\sigma\sigma'} \\
=&(f_1\times \cdots \times f_n)\circ ( g_{\sigma^{-1}(1)}\times \cdots \times g_{\sigma^{-1}(n)})\circ f_{\sigma\sigma'}\\
=&(f_1\circ g_{\sigma^{-1}(1)}\times \cdots \times f_n\circ g_{\sigma^{-1}(n)}) \circ f_{\sigma\sigma '}.
\end{aligned}$$
Comparing the results of both equations, we obtain that $\Theta$ is a morphism. We now prove that $\Theta $ is injective. Asume that $\Theta(\ ((f_1,\dots,f_n), \sigma) \ ) =\text{id}_{B^n}$. This means that for all $b_1,\dots,b_n \in B$, we have 
$$ (f_1(b_{\sigma^{-1}(1)}),\dots, f_n(b_{\sigma^{-1}(n)}))=(b_1,\dots,b_n).$$ 
If $\sigma ^{-1}(i) \neq i$, we get that for all $b\in B$ $a_i=f_i(b)$. This leads to a contradiction because $f_i$ is a bijection. Therefore, $\sigma=\text{id}_{\{1,\dots, n \}}$ and the above equation reduces to  
$$(f_1(b_1),\dots, f_n(b_n))=(b_1,\dots,b_n), $$
for all $b_1,\dots b_n \in B$. This implies that $f_1=\cdots= f_n= \text{id}_B$. We have shown that if $\Theta(\ ((f_1,\dots,f_n), \sigma) \ ) =\text{id}_{B^n}$, then $((f_1,\dots,f_n),\sigma)=((\text{id}_B,\dots, \text{id}_B),\text{id}_{\{1,\dots,n\}})$, proving that $\Theta$ is injective.
\end{proof}

\begin{proposition}
For $B=R_i$ the natural morphism $\Theta : Aut(R_i) \wr \mathfrak{S}_n \to Aut(R_i^n)$ of the previous proposition is an isomorphism.
\end{proposition}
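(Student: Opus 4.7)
The plan is to prove surjectivity of $\Theta$, since injectivity is already established by the preceding proposition. So I will start from an arbitrary $f\in Aut(R_i^n)$ and construct an explicit preimage $((f_1,\dots,f_n),\sigma)\in Aut(R_i)\wr \mathfrak{S}_n$.

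The key observation is that $R_i^n$ is a special case of the products studied in Section~\ref{s1} (with a single ring $R_i$ of the family), so Propositions~\ref{idem} and~\ref{per} apply directly. Denoting by $1_j$ the unit of the $j$-th copy of $R_i$ inside $R_i^n$, Proposition~\ref{per} yields a permutation $\sigma\in \mathfrak{S}_n$ such that $f(1_j)=1_{\sigma(j)}$ for every $j$, together with the fact that $f$ restricts to a ring (or algebra) isomorphism between the $j$-th copy of $R_i$ and the $\sigma(j)$-th copy.

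Next I will use the permutation automorphism $f_\sigma$ to ``straighten'' $f$. A direct check shows $f_\sigma(1_j)=1_{\sigma(j)}$, so the composition $g=f\circ f_{\sigma^{-1}}$ satisfies $g(1_j)=1_j$ for all $j$. Applying the second part of Proposition~\ref{per} to the automorphism $g$ (which now fixes each primitive idempotent), I obtain that $g$ stabilizes each factor $(R_i)_j=E_1(1_j)$ and therefore decomposes as a product $g=f_1\times \cdots \times f_n$ with $f_j\in Aut(R_i)$. Rewriting gives
\[
f = g\circ f_\sigma = (f_1\times\cdots\times f_n)\circ f_\sigma = \Theta((f_1,\dots,f_n),\sigma),
\]
which establishes surjectivity.

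I do not expect any genuine obstacle: everything reduces to the idempotent analysis already carried out in Section~\ref{s1}. The only care needed is bookkeeping with $\sigma$ versus $\sigma^{-1}$ in the definitions of $f_\sigma$ and of the wreath product multiplication, so that the final equality matches the formula for $\Theta$ exactly; this is why I compose on the right with $f_{\sigma^{-1}}$ rather than on the left.
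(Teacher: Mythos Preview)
Your proof is correct and follows essentially the same approach as the paper's: both reduce to Proposition~\ref{per} to see that $f$ permutes the $n$ copies of $R_i$ and then read off a permutation together with coordinate-wise automorphisms giving a preimage under $\Theta$. Your version is slightly more explicit in first straightening $f$ by composing with $f_{\sigma^{-1}}$ before decomposing, whereas the paper writes $f(b_1,\dots,b_n)=(f_1(b_{\sigma(1)}),\dots,f_n(b_{\sigma(n)}))$ directly and identifies this with $\Theta((f_1,\dots,f_n),\sigma^{-1})$; the content is the same.
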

\begin{proof}
Since we have shown in the previous proposition that $\Theta$ is injective we only need to prove that $\Theta$ is surjective. Let $f$ be an automorphism of $R_i^{n}$. The ring (or algebra) $R_i^n$ is in fact canonically isomorphic to $A_{0,\dots,0,n}$ where zero is repeated $i-1$ times. Hence, by proposition \ref{per} $f$ permutes the $n$ factors of $R_i$ in $R_i^n$. Consequently there exist $\sigma \in \mathfrak{S}_n$ and $f_1,\dots,f_n\in Aut(R_i)$, such that :
$$ f(b_1,\dots,b_n)=(f_1(b_{\sigma(1)}),\dots, f_n(b_{\sigma(n)})),$$
for all $(b_1,\dots, b_n) \in R_i^n$. Hence, 
$$ f=(f_1\times \cdots \times f_n)\circ f_{\sigma^{-1}}=\Theta(\ ((f_1\times \cdots \times f_n),\sigma^{-1}) \ ) .$$ We have proved that any $f\in Aut(R_i^n)$ lies in the image of $\Theta$ (i.e $\Theta$ is surjective). 
\end{proof}

\begin{proposition}\label{wr}
The automrphism group of $A_{n_1,\dots,n_m}=R_1^{n_1}\times \cdots \times R_m^{n_m}$ is naturally isomorphic to : 
$$ (Aut(R_1)\wr \mathfrak{S}_{n_1})\times  \cdots \times (Aut( R_m)\wr \mathfrak{S}_{n_m})$$
\end{proposition}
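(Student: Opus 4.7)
The plan is to assemble the statement from the two structural results already established in this section: the restriction isomorphism $\Psi$ of Proposition \ref{Psi}, which factors the full automorphism group along the blocks of identical factors, and the wreath product isomorphism $\Theta$ of the preceding proposition, which describes the automorphism group of each block $R_i^{n_i}$. No further analysis of idempotents or of the structure of $R_i$ is required at this stage.

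First I would invoke Proposition \ref{Psi} to obtain the natural isomorphism
$$\Psi : Aut(A_{n_1,\dots,n_m}) \xrightarrow{\ \sim\ } Aut(R_1^{n_1}) \times \cdots \times Aut(R_m^{n_m}),$$
sending $f$ to the tuple $(f_1,\dots,f_m)$ of its restrictions, where $f_i$ is well defined because Proposition \ref{per} guarantees that $f$ stabilizes each factor $R_i^{n_i}$. Next, for each $i \in \{1,\dots,m\}$, I would apply the previous proposition, which asserts that the natural map
$$\Theta_i : Aut(R_i) \wr \mathfrak{S}_{n_i} \xrightarrow{\ \sim\ } Aut(R_i^{n_i})$$
is a group isomorphism. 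Composing the product $\Theta_1 \times \cdots \times \Theta_m$ with the inverse of $\Psi$ yields the desired natural isomorphism
$$(Aut(R_1) \wr \mathfrak{S}_{n_1}) \times \cdots \times (Aut(R_m) \wr \mathfrak{S}_{n_m}) \xrightarrow{\ \sim\ } Aut(A_{n_1,\dots,n_m}).$$

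There is no genuine obstacle here; the only point that deserves a brief comment is naturality, which comes for free because both $\Psi$ and each $\Theta_i$ are defined by restriction and evaluation, so the composite is constructed canonically from the data. If desired, one can make the direction explicit by describing the isomorphism in the forward direction: given $((f_{1,1},\dots,f_{1,n_1}),\sigma_1,\dots,(f_{m,1},\dots,f_{m,n_m}),\sigma_m)$, send it to the automorphism of $A_{n_1,\dots,n_m}$ whose restriction to the block $R_i^{n_i}$ is $\Theta_i((f_{i,1},\dots,f_{i,n_i}),\sigma_i)$, and verify bijectivity factorwise using the two results above. Thus the proof reduces to a one-line composition, and can be stated as such.
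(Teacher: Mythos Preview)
Your proposal is correct and matches the paper's own proof exactly: the paper simply states that the result is obtained by combining Proposition~\ref{Psi} with the preceding proposition, which is precisely the composition $\Psi^{-1}\circ(\Theta_1\times\cdots\times\Theta_m)$ you describe. Your write-up is more detailed than necessary, but the approach is identical.
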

\begin{proof}
The result is obtained by combining proposition \ref{Psi} with the previous proposition.
\end{proof}

\section{Algebras generated by one element over a perfect field}\label{s3}
In this section $\K$ is a perfect field. This section is divided into $2$ subsections. In the first subsection, we classify $\K$-algebras generated by one element. In the second subsection, we study the automorphism group of such algebras. 
\subsection{Classification of algebras with one generator over a perfect field}We will classify the $\K$-algebras generated by one element over the prefect field $\K$.\\\\
The following result is theorem 1.7 of \cite{MM} under the assumption $\K$ is perfect.
\begin{theorem}[\cite{MM}]\label{e}
Let $P$ be an irreducible polynomial over $\K$ $($perfect$)$. For $n\geq 0$, the $\K$-algebra $\K[X]/(P^n)$ is isomorphic to $(\K[X]/(P))[Y]/(Y^n)$.
\end{theorem}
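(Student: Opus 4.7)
The plan is to construct a $\K$-algebra section of the natural quotient $\pi:R:=\K[X]/(P^n)\twoheadrightarrow\K[X]/(P)=\FL$ and then use it to repackage the $\bar P$-adic expansion of $R$ as the desired isomorphism with $\FL[Y]/(Y^n)$. Write $\alpha$ for the class of $X$ in $\FL$ and $\bar P$ for the class of $P$ in $R$; note that $R$ is a local Artinian $\K$-algebra with nilpotent maximal ideal $\mathfrak{m}=(\bar P)$ (so $\mathfrak{m}^n=0$) and residue field $\FL$.

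The crux is to produce $\tilde\alpha\in R$ with $P(\tilde\alpha)=0$ and $\tilde\alpha\equiv\alpha\pmod{\mathfrak{m}}$, i.e.\ a root of $P$ in $R$ lifting $\alpha$. This is where perfectness enters: $P$ is then separable, so $\gcd(P,P')=1$ and therefore $P'(X)$ is a unit in $R$. Starting from $x_0=X$ (which satisfies only $P(x_0)=\bar P\in\mathfrak{m}$) I would run the Newton iteration $x_{k+1}=x_k-P(x_k)/P'(x_k)$; the formal Taylor identity $P(x+\delta)-P(x)-P'(x)\delta\in\delta^2\K[x,\delta]$ gives by induction that $P(x_k)\in\mathfrak{m}^{2^k}$, so after at most $\lceil\log_2 n\rceil$ steps the sequence stabilises to the required $\tilde\alpha$.

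With $\tilde\alpha$ in hand, the universal property of $\FL=\K[X]/(P)$ gives a $\K$-algebra morphism $s:\FL\to R$ sending $\alpha$ to $\tilde\alpha$, and $\pi\circ s=\mathrm{id}_\FL$ by construction. I would then define
\[
\Phi:\FL[Y]/(Y^n)\longrightarrow R,\qquad \sum_{i=0}^{n-1}a_i Y^i\longmapsto\sum_{i=0}^{n-1}s(a_i)\,\bar P^{\,i},
\]
which is well defined because $\bar P^{\,n}=0$ and is a $\K$-algebra morphism because $s$ is and $\bar P$ is central. Both source and target are free $\K$-modules of rank $n\deg P$, so it suffices to check surjectivity. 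Writing $\tilde\alpha=X+\bar P\,u$ for some $u\in R$ one has $\tilde\alpha^{\,i}\bar P^{\,j}\equiv X^i\bar P^{\,j}\pmod{\mathfrak{m}^{\,j+1}}$, so filtering by powers of $\mathfrak{m}$ the family $\{\tilde\alpha^{\,i}\bar P^{\,j}:0\le i<\deg P,\ 0\le j<n\}$ is obtained from the standard basis $\{X^i\bar P^{\,j}\}$ of $R$ by a unitriangular change of basis; hence it is itself a $\K$-basis of $R$, and it is precisely the image under $\Phi$ of the basis $\{\alpha^i Y^j\}$ of $\FL[Y]/(Y^n)$.

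The main obstacle, and the only nontrivial input, is the Hensel-type lifting producing $\tilde\alpha$; the remainder is a dimension count combined with a triangular change of basis. Perfectness is used exactly once, to force separability of $P$, which is precisely what makes $P'(X)$ invertible in $R$ so that Newton's iteration is available. Without this hypothesis (for instance $\K=\mathbb{F}_p(t)$, $P=X^p-t$, $n\ge 2$) no such section $s$ exists, so the statement is genuinely sharp at this step.
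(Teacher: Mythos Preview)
Your argument is correct and is in fact the standard Hensel-lifting proof of this statement. The paper itself does not supply a proof: the theorem is quoted from an external reference \cite{MM} (``theorem 1.7 of [MM]''), so there is no in-paper argument to compare against. Your write-up could therefore serve as a self-contained substitute.

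Two minor remarks on presentation, neither affecting correctness. First, you may want to handle the trivial cases $n=0,1$ separately at the outset, since for $n\le 1$ the maximal ideal is zero and the Newton iteration and the filtration argument are vacuous. Second, in the triangular change-of-basis step it is worth making the ordering explicit: list the basis vectors $X^i\bar P^{\,j}$ (resp.\ $\tilde\alpha^{\,i}\bar P^{\,j}$) by increasing $j$ and then by $i$; your congruence $\tilde\alpha^{\,i}\bar P^{\,j}\equiv X^i\bar P^{\,j}\pmod{\mathfrak m^{\,j+1}}$ then shows the transition matrix is block-unitriangular with identity diagonal blocks, which is the precise form of ``unitriangular'' being used. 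With these clarifications the exposition is complete.
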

 
Let $\FL$ be a finite field extension of $\K$. We denote by $\varphi(\FL)$ the number of monic irreducible polynomials $P$ such that $\K[X]/(P)$ is isomorphic to $\FL$ as a $\K$-algebra. 
 
\begin{proposition}
For $\FL$ a finite extension of $\K$ $($perfect$)$. The number $\varphi(\FL)$ is greater or equal to $1$. $\varphi(\FL)$ is finite if and only if $\K$ is finite.
\end{proposition}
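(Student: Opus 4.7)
The plan splits along the three assertions. For the first claim, I would invoke the primitive element theorem: because $\K$ is perfect, the finite extension $\FL/\K$ is separable, so $\FL=\K(\alpha)$ for some $\alpha\in\FL$, and its minimal polynomial $P$ yields a $\K$-algebra isomorphism $\K[X]/(P)\simeq \FL$ via $X\mapsto \alpha$. In particular $\varphi(\FL)\geq 1$. If in addition $\K$ is finite, set $n=[\FL:\K]$; then every $P$ counted by $\varphi(\FL)$ is monic of degree $n$ (since $\dim_\K \K[X]/(P)=\deg P=\dim_\K \FL$), and there are at most $\vert\K\vert^n$ such polynomials, so $\varphi(\FL)<\infty$.

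The substantive direction is that $\K$ infinite forces $\varphi(\FL)$ infinite, and I would produce infinitely many admissible $P$'s by translating a single one. Pick a primitive $\alpha\in\FL$ with minimal polynomial $P$ of degree $n$, and for each $c\in\K$ set $P_c(X):=P(X-c)$. Then $P_c$ is the minimal polynomial of $\alpha+c$, which is again primitive (since $\K(\alpha+c)=\K(\alpha)=\FL$); hence $P_c$ is monic irreducible with $\K[X]/(P_c)\simeq \FL$, so each $P_c$ is counted by $\varphi(\FL)$. It remains to check that the family $\{P_c\}_{c\in\K}$ contains infinitely many distinct polynomials. If $P_{c_1}=P_{c_2}$, then the multisets of roots of $P_{c_1}$ and $P_{c_2}$ in an algebraic closure of $\K$ coincide; writing $\beta_1,\dots,\beta_n$ for the roots of $P$, this says $\{\beta_i+c_1\}=\{\beta_j+c_2\}$, which forces $c_1-c_2\in\{\beta_j-\beta_i : 1\leq i,j\leq n\}$, a set of at most $n^2$ elements. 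Hence the equivalence relation $c_1\sim c_2\iff P_{c_1}=P_{c_2}$ on $\K$ has fibres of size at most $n^2$, and the number of classes, which is bounded above by $\varphi(\FL)$, is at least $\vert\K\vert/n^2$, i.e.\ infinite.

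The only delicate step is the very last one: showing the translates $P_c$ are not all equal. In positive characteristic a polynomial can admit non-trivial additive periods, so one cannot naively say ``translation changes $P$''. The root-translation argument above handles this uniformly by passing to the algebraic closure; as an alternative I could instead examine the constant term $P_c(0)=P(-c)$, which is a non-constant polynomial in $c$ of degree $n$ and therefore takes infinitely many values as $c$ ranges over the infinite set $\K$, already yielding infinitely many distinct $P_c$.
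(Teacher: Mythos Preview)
Your proof is correct and follows the same strategy as the paper: the primitive element theorem gives $\varphi(\FL)\geq 1$, a crude count of monic degree-$n$ polynomials handles the finite-$\K$ case, and translates $P(X-c)$ furnish the admissible polynomials when $\K$ is infinite. You are in fact more careful than the paper on one point: the paper simply asserts that infinitely many translates arise without arguing that they are distinct, whereas you close this gap with two independent arguments (bounding the fibres via root-differences, or tracking the constant term $P(-c)$).
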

\begin{proof}
Since $\K$ is perfect by the primitive element theorem there is a polynomial $P\in \K[X]$ such that $\FL \simeq \K[X]/(P)$ as a $\K$-algebra. This proves that $\varphi(\FL) \geq 1$. If $\K$ is finite there are finitely many irreducible polynomials of degree equal to the degree of the extension $\FL/\K$, hence $\varphi(\FL)$ is finite. The polynomials $P(X)$ and $P(X+a)$ for $a\in \K$ define isomorphic filed extensions. Hence, if $\K$ is infinite then $\varphi(\FL)$ is infinite.   
\end{proof}
We recall that there is a formula for $\varphi(\FL)$ when $\FL$ is finite. 
For $m\geq 1$, $\underline{n}_m$ will designate an $m$-tuple of positive integers $(n_1,\dots,n_m)$ such that $n_m\geq 1$ and we set $\vert \underline{n}_m \vert=n_1+\cdots+n_m$. 
For $\FL$ a finite extension of $\K$ and an $m$-tuple $\underline{n}_m$ we define the $\K$-algebra (and $\FL$-algebra) $O_{\underline{n}_m}(\FL)$ by :
 $$O_{\underline{n}_m}(\FL)=\FL^{n_1}\times (\FL[X]/(X^2))^{n_2}\times \cdots \times (\FL[X]/(X^m))^{n_m}.$$ 

\begin{theorem}\label{A}
Let $A$ be a $\K$-algebra generated by one element. $A$ is either isomorphic to $\K[X]$ either isomorphic to :
$$A_{\underline{n}_{m_1},\dots, \underline{n}_{m_k}}(\FL_1,\dots ,\FL_k)=O_{\underline{n}_{m_1}}(\FL_1)\times \cdots \times O_{\underline{n}_{m_k}}(\FL_k),$$
Where $k\geq 1$, $\FL_1,\dots,\FL_k$ are non isomorphic finite field extensions of $\K$ $($not isomophic as $\K$-algebras$)$ and $\vert \underline{n}_{m_i} \vert \leq \varphi(\FL_i)$, for $i=1,\dots,k$. Moreover, any such $A_{\underline{n}_{m_1},\dots, \underline{n}_{m_k}}(\FL_1,\dots ,\FL_k)$ is generated as a $\K$-algebra by one element.
\end{theorem}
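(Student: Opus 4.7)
The plan is to realize any one-generator $\K$-algebra as a quotient $\K[X]/(Q)$, decompose it via the Chinese Remainder Theorem, apply Theorem \ref{e} to each primary component, and regroup by $\K$-algebra isomorphism class of the residue field. The converse direction runs this construction in reverse, building a suitable $Q$ from the given data.

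For the forward direction, if $A$ is generated as a $\K$-algebra by an element $x$, the map $\K[X]\to A$ sending $X\mapsto x$ is surjective with principal kernel (since $\K[X]$ is a PID). Either the kernel is $(0)$, in which case $A\cong \K[X]$, or it is $(Q)$ for a monic nonzero polynomial $Q$. Factor $Q=\prod_{s=1}^N P_s^{e_s}$ with the $P_s$ pairwise distinct monic irreducibles. The Chinese Remainder Theorem and Theorem \ref{e} give
\begin{equation*}
A \;\cong\; \prod_{s=1}^N \K[X]/(P_s^{e_s}) \;\cong\; \prod_{s=1}^N \FL_{P_s}[Y]/(Y^{e_s}),
\end{equation*}
where $\FL_{P_s}:=\K[X]/(P_s)$. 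Pick representatives $\FL_1,\dots,\FL_k$ for the $\K$-algebra isomorphism classes occurring among the $\FL_{P_s}$. For each $j$ and each integer $e\geq 1$, set $n_{j,e}$ to be the number of indices $s$ such that $\FL_{P_s}$ is isomorphic to $\FL_j$ as a $\K$-algebra and $e_s=e$; let $m_j$ be the largest $e$ with $n_{j,e}>0$ and put $\underline{n}_{m_j}=(n_{j,1},\dots,n_{j,m_j})$. Regrouping the factors yields $A\cong \prod_{j=1}^k O_{\underline{n}_{m_j}}(\FL_j)$. The inequality $\vert\underline{n}_{m_j}\vert\leq \varphi(\FL_j)$ follows because $\vert\underline{n}_{m_j}\vert$ counts exactly the distinct monic irreducibles $P_s$ whose quotient is $\K$-algebra isomorphic to $\FL_j$, and by definition there are at most $\varphi(\FL_j)$ such polynomials.

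For the converse, suppose the data $(\FL_1,\dots,\FL_k)$ and $(\underline{n}_{m_1},\dots,\underline{n}_{m_k})$ satisfy the stated constraints. For each $j$ choose $\vert\underline{n}_{m_j}\vert$ distinct monic irreducible polynomials $P_{j,e,i}\in \K[X]$, indexed by $1\leq e\leq m_j$ and $1\leq i\leq n_{j,e}$, with each $\K[X]/(P_{j,e,i})$ isomorphic to $\FL_j$ as a $\K$-algebra; the hypothesis $\vert\underline{n}_{m_j}\vert\leq \varphi(\FL_j)$ guarantees that enough such polynomials exist, and the pairwise non-isomorphism of the $\FL_j$ as $\K$-algebras ensures no collision across different $j$. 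Setting $Q:=\prod_{j,e,i}P_{j,e,i}^{\,e}$, the Chinese Remainder Theorem together with Theorem \ref{e} gives $\K[X]/(Q)\cong A_{\underline{n}_{m_1},\dots,\underline{n}_{m_k}}(\FL_1,\dots,\FL_k)$, and this quotient is visibly generated over $\K$ by the image of $X$.

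The main obstacle is conceptual rather than deep: one must carefully track the passage from the factorization of $Q$ in $\K[X]$ to the product decomposition of $A$, and identify the constraint $\vert\underline{n}_{m_j}\vert\leq \varphi(\FL_j)$ as being both forced (each primary factor contributes a distinct irreducible $P$, so one cannot use more than $\varphi(\FL_j)$ of them for a fixed residue-field class) and sufficient (as soon as enough such polynomials are available, one can build $Q$ to realize the algebra). Once the counting is handled, the rest reduces to routine applications of CRT and Theorem \ref{e}.
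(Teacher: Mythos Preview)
Your proof is correct and follows essentially the same route as the paper: factor the generator polynomial, apply the Chinese Remainder Theorem together with Theorem~\ref{e} to each primary component, and regroup by the $\K$-algebra isomorphism class of the residue field, with the converse obtained by reversing this construction. Your explicit remark that the pairwise non-isomorphism of the $\FL_j$ prevents collisions among the chosen irreducibles across different $j$ is a nice clarification that the paper leaves implicit.
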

\begin{proof}
Since $A$ is generated by one element, $A$ is either isomorphic to $\K[X]$ either isomorphic to a quotient $\K[X]/(P)$ for some $P\in \K[X]$. In the latter case, denote by $P_1,\dots,P_r$ the monic irreducible factors of $P$. Let $\FL_1,\dots,\FL_k$ be representatives of the $\K$-algebra isomorphism classes of the fields $\K[X]/(P_1),\dots,\K[X]/(P_r)$ and denote by $X(\FL_i)$ the set of irreducible factors giving a field extension isomorphic (as a $\K$-algebra) to $\FL_i$. $\FL_1,\dots,\FL_k$ are not isomorphic as $\K$-algebras, $X(\FL_1),\dots,X(\FL_k)$ are disjoint and the cardinal of $X(\FL_i)$ is less then $\varphi(\FL_i)$, for $i=1,\dots,k$. We can write :
$$ P=\overset{k}{\underset{i=1}{\prod}} \underset{Q\in X(\FL_i)}{\prod} Q^{n(Q)},$$
for some $n(Q)$. Applying the chinese reminder theorem to $\K[X]/(P)$ then theorem \ref{e} we get that :
$$ \K[X]/(P) \simeq \overset{k}{\underset{i=1}{\prod}} \underset{Q\in X(\FL_i)}{\prod} \K[X]/ (Q^{n(Q)})\simeq \overset{k}{\underset{i=1}{\prod}} \underset{Q\in X(\FL_i)}{\prod} \FL_i[X]/ (X^{n(Q)}). $$
As we have seen $\vert X(\FL_i) \vert \leq \varphi(\FL_i)$, hence the last equation proves that $A$ is isomorphic to a $\K$-algebra of the form $A_{\underline{n}_{m_1},\dots, \underline{n}_{m_k}}(\FL_1,\dots ,\FL_k)$ as in the proposition. We now prove the "moreover" part of the proposition. We fix $A_{\underline{n}_{m_1},\dots, \underline{n}_{m_k}}(\FL_1,\dots ,\FL_k)$ as in the proposition. Since $\vert n_{m_i} \vert \leq \varphi(\FL_i)$, one has distinct irreducible polynomials $Q_{i,1},\dots,Q_{i,\vert \underline{n}_{m_i} \vert}$ such that $\K[X]/Q_{i,j} \simeq \FL_i$ for $i=1,\dots,k$. For $0\leq l< m_i$, Set $$Q_i^{(l)}=Q_{i,n_{1,i}+\dots+n_{l,i}+1}\cdots Q_{i,n_{1,i}+\dots+n_{l+1,i}},$$where $(n_{1,i},\dots, n_{m_i,i})=\underline{n}_{m_i}$ and set $$ Q_i=\overset{m_i-1}{\underset{l=0}{\prod}}(Q_i^{(l)})^{l+1}\quad \text{and} \quad Q=\overset{k}{\underset{i=1}{\prod}}Q_i.$$
One has by the chinese reminder theorem and theorem \ref{e} : \begin{align*} \K[X]/(Q)\simeq\overset{k}{\underset{i=1}{\prod}}\K[X]/(Q_i)&\simeq \overset{k}{\underset{i=1}{\prod}}\overset{m_i-1}{\underset{l=0}{\prod}}\K[X]/((Q_i^{(l)})^{l+1})\\&\simeq \overset{k}{\underset{i=1}{\prod}}\overset{m_i-1}{\underset{l=0}{\prod}}(\FL_i[X]/(X^{l+1}))^{n_{l+1,i}}\\& \simeq  \overset{k}{\underset{i=1}{\prod}} O_{\underline{n}_{m_i}}(\FL_i) .\end{align*}
This proves that $\K[X]/(Q)$ is isomorphic to $A_{\underline{n}_{m_1},\dots, \underline{n}_{m_k}}(\FL_1,\dots ,\FL_k)$ and hence the later algebra is generated by one element.
\end{proof}

\begin{proposition}\label{id}

\begin{itemize}
\item[1)] Let $\FL$ be a field. The idempotents of $\FL[X]/(X^n)$ are the unit and zero.
\item[2)] Let $\FL$ and $\FL'$ be finite field extensions of $\K$. $\FL[X]/(X^n)$ is isomorphic to $\FL'[X]/(X^m)$ as $\K$-algebras if and only if $\FL\simeq \FL'$ as $\K$-algebras and $n=m$
\end{itemize}
\end{proposition}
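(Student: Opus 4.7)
For part (1), my plan is to exploit the fact that $\FL[X]/(X^n)$ is a local ring whose maximal ideal $(X)$ is nilpotent of index $n$. Writing an idempotent as $a = a_0 + a_1 X + \cdots + a_{n-1} X^{n-1}$ and reducing $a^2 = a$ modulo $X$, I obtain $a_0^2 = a_0$ in the field $\FL$, so $a_0 \in \{0,1\}$. If $a_0 = 0$ then $a \in (X)$ is nilpotent, and combining this with the identity $a = a^k$ valid for all $k \geq 1$ forces $a = 0$. If $a_0 = 1$ then $1-a$ is itself an idempotent with zero constant term, and the previous case yields $1-a = 0$.

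For part (2), the implication ``$\Leftarrow$'' is immediate, since any $\K$-algebra isomorphism $\FL \to \FL'$ extends coefficient-wise to an isomorphism $\FL[X]/(X^n) \to \FL'[X]/(X^n)$. For the converse, I would suppose $f : \FL[X]/(X^n) \to \FL'[X]/(X^m)$ is a $\K$-algebra isomorphism and use that the nilradical of a commutative ring is preserved by any ring isomorphism: thus $f$ sends the nilradical $(X)$ of the source onto the nilradical $(X)$ of the target. These nilradicals have nilpotency indices exactly $n$ and $m$ respectively (since $X^{n-1} \neq 0$ in the source and $X^{m-1} \neq 0$ in the target), which forces $n = m$. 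Passing to the quotient by the nilradical then produces a $\K$-algebra isomorphism between the residue fields $\FL$ and $\FL'$.

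I do not anticipate a significant obstacle; everything reduces to the observation that the maximal ideal, its nilpotency index, and the residue field of the local $\K$-algebra $\FL[X]/(X^n)$ are all intrinsic invariants and therefore determined by the isomorphism class of the algebra.
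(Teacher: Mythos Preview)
Your proposal is correct. The approaches differ in detail though the overall structure is close.

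For part~1), both you and the paper first reduce modulo $X$ to obtain $a_0\in\{0,1\}$. The paper then continues by expanding $a^2=a$ coefficient by coefficient and showing inductively that $\lambda_i=0$ for $i\geq 1$. Your argument is shorter: once $a_0=0$ you observe that $a$ is simultaneously idempotent and nilpotent, hence zero, and the case $a_0=1$ follows by applying this to $1-a$. This avoids the inductive computation entirely.

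For part~2), both arguments recover $\FL\simeq\FL'$ by passing to the residue field of the local ring. The difference lies in how $n=m$ is obtained: the paper uses a dimension count over $\K$ (namely $\dim_\K \FL[X]/(X^n)=n\cdot[\FL:\K]$), while you use the nilpotency index of the nilradical. Your route has the mild advantage of not invoking the finiteness of $[\FL:\K]$, so it would go through verbatim for arbitrary field extensions; the paper's dimension argument genuinely needs the extensions to be finite, which is of course part of the hypothesis here.
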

\begin{proof}
 We prove 1). $(1,X,X^2 , \dots,\e^{n-1}) $ is a basis of $\K[X]/(X^n)$. Therefore, any element of $\K[X]/(X^n)$ is of the form $\lambda_0 + \lambda_1 \e + \cdots + \lambda_{n-1} \e^{n-1}$ with $\lambda_i \in \K$ ($i=0,\dots,n-1$). Such an element is idempotent if and only if $$(\lambda_0 + \lambda_1 \e + \cdots + \lambda_{n-1} \e^{n-1})^2=\lambda_0 + \lambda_1 \e + \cdots + \lambda_{n-1} \e^{n-1}.$$  By developping the square we get the equation : $$\alpha_0 + \alpha_1 \e + \cdots + \alpha_{n-1} \e^{n-1}=  \lambda_0 + \lambda_1 \e + \cdots + \lambda_{n-1} \e^{n-1} \quad \text{where}\quad \alpha_i=\underset{k+j=i}{\sum} \lambda_k \lambda_j,$$ and hence we have the set of equations : $$ \lambda_i  =  \underset{k+j=i}{\sum} \lambda_k \lambda_j \quad \text{for} \quad i=0,\dots,n-1.$$ For $i=0$, we get the equation $\lambda_0=\lambda_0^2$ with two solutions $\lambda_0=1$ or $\lambda_0=0$. For $i=1$, we find that $\lambda_1=0$ for both values of $\lambda_0$ we found. Using an induction on the equations one shows that $\lambda_i=0$ for $i\geq 2  $ and proves the first point of the proposition. We now prove $2)$. $\FL[X]/(X^n)$ and $\FL'[X]/(X^m)$ are local rings with respective residue fields $\FL$ and $\FL'$. Hence, if $\FL[X]/(X^n)$ and $\FL'[X]/(X^m)$ are isomorphic as $\K$-algebras then $\FL\simeq \FL'$ as $\K$-algebras and $\FL$ and $\FL'$ has the same dimension $d$ over $\K$. Finally, the dimensions of $\FL[X]/(X^n)$ and $\FL'[X]/(X^m)$ over $\K$ are respectivly $dn$ and $dm$ and they should be equal if we have a $\K$-algebra isomorphism $\FL[X]/(X^n)\simeq \FL'[X]/(X^m)$, hence, $n=m$. We proved the "if" part. The converse is clear. We have proved 2). 
\end{proof} 
Let $(\FL_i)_{i\in I}$ be a family of non isomorphic finite field extensions of $\K$ $($not isomophic as $\K$-algebras$)$. The proposition shows that the family of  $\K$-algebras   $(\FL_i[X]/(X^n))_{i\in I,n \in \N}$ is a family of non isomorphic $\K$-algebras having each exactly two idempotents ($1$ and $0$). Thus, the results of section \ref{s1} apply. We get from proposition \ref{iso} the following :
\begin{theorem}\label{B}
Let $A=A_{\underline{n}_{m_1},\dots, \underline{n}_{m_k}}(\FL_1,\dots ,\FL_k)$ and $A'=A_{\underline{n}_{m'_1},\dots, \underline{n}_{m'_{k'}}}(\FL'_1,\dots ,\FL'_{k'})$ be $\K$-algebras as in theorem \ref{A}. $A$ and $A'$ are isomorphic $\K$-algebras if and only if $k=k'$ and there exist a permutation $\sigma : \{1,\dots ,k \} \to \{1,\dots,k \}$ such that $\FL'_i\simeq \FL_\sigma(i)$ $($as $\K$-algebras$)$ and $\underline{n}_{m'_i}=\underline{n}_{m_{\sigma(i)}}$.
\end{theorem}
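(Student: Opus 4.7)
The plan is to reduce Theorem \ref{B} directly to Proposition \ref{iso} by displaying both $A$ and $A'$ as products of powers of a common family of non-isomorphic building blocks having only two idempotents. The ``if'' direction is immediate: a permutation $\sigma$ and the given isomorphisms $\FL'_i \simeq \FL_{\sigma(i)}$ induce factorwise $\K$-algebra isomorphisms $O_{\underline{n}_{m'_i}}(\FL'_i) \simeq O_{\underline{n}_{m_{\sigma(i)}}}(\FL_{\sigma(i)})$, which assemble into an isomorphism $A'\simeq A$ after reordering the factors. So the content is in the ``only if'' direction.

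For that, I first unfold the definition of $O_{\underline{n}_{m_i}}(\FL_i)$: writing $\underline{n}_{m_i}=(n_{1,i},\dots,n_{m_i,i})$, we have
\[
A \;\simeq\; \prod_{i=1}^{k}\prod_{j=1}^{m_i}\bigl(\FL_i[X]/(X^j)\bigr)^{n_{j,i}},
\]
and analogously for $A'$ with primed data. The building blocks appearing here are the $\K$-algebras $\FL_i[X]/(X^j)$ (respectively $\FL'_i[X]/(X^j)$). By part~1 of Proposition~\ref{id} each such building block has exactly the two idempotents $0$ and $1$, and by part~2 of the same proposition two building blocks $\FL[X]/(X^n)$ and $\FL'[X]/(X^m)$ are isomorphic as $\K$-algebras iff $\FL\simeq\FL'$ as $\K$-algebras and $n=m$. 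Since the $\FL_i$ are pairwise non-isomorphic as $\K$-algebras, the family $\{\FL_i[X]/(X^j)\}_{1\le i\le k,\;j\ge 1}$ is a family of pairwise non-isomorphic $\K$-algebras each with only two idempotents; the analogous statement holds for the primed family, and the union of both families is still such a family (isomorphic blocks get identified).

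Hence both $A$ and $A'$ are of the form $A_{n_1,\dots,n_N}$ considered in Section~\ref{s1}, relative to an enumeration of the union of all blocks $\FL_i[X]/(X^j)$ and $\FL'_i[X]/(X^j)$ that actually occur. A $\K$-algebra isomorphism $A\simeq A'$ is in particular a ring isomorphism, so Proposition~\ref{iso} forces the multiplicity of every block in $A$ to equal its multiplicity in $A'$. Translating: for every pair $(i,j)$ with $n_{j,i}>0$ there must exist a unique $(i',j')$ with $\FL'_{i'}\simeq \FL_i$ as $\K$-algebras, $j'=j$ and $n'_{j',i'}=n_{j,i}$; conversely every nonzero $n'_{j',i'}$ is accounted for this way. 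Because inside a single $O_{\underline{n}_{m_i}}(\FL_i)$ only one field appears, this matching respects the grouping by field, and produces a bijection $\sigma:\{1,\dots,k\}\to\{1,\dots,k'\}$ with $\FL'_i\simeq\FL_{\sigma(i)}$ and $\underline{n}_{m'_i}=\underline{n}_{m_{\sigma(i)}}$; in particular $k=k'$.

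The only non-routine bookkeeping step, and hence the main obstacle, is verifying that Proposition~\ref{iso} can indeed be applied after merging the two families: one must check that distinct $\K$-algebras $\FL_i[X]/(X^j)$ and $\FL'_{i'}[X]/(X^{j'})$ coming from the two sides are either equal or non-isomorphic, which is precisely what part~2 of Proposition~\ref{id} guarantees. Once this is noted, the reduction is straightforward and the permutation $\sigma$ falls out of the equality of multiplicities.
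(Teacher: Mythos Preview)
Your proof is correct and follows exactly the route the paper takes: use Proposition~\ref{id} to see that the blocks $\FL_i[X]/(X^j)$ form a family of pairwise non-isomorphic $\K$-algebras with only two idempotents, and then invoke Proposition~\ref{iso} to match multiplicities and extract the permutation $\sigma$. Your write-up is simply a more detailed unpacking of the one-sentence reduction the paper gives just before the statement of the theorem; the only quibble is that the phrase ``is in particular a ring isomorphism'' is unnecessary, since Proposition~\ref{iso} is stated for algebras as well and Proposition~\ref{id}(2) only gives non-isomorphism as $\K$-algebras.
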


\subsection{Automorphism group of algebras with one generator over a perfect field} 
We will consider the $\K$-algebra automorphism group of the algebras $A_{\underline{n}_{m_1},\dots, \underline{n}_{m_k}}(\FL_1,\dots ,\FL_k)$ described in theorem \ref{A}.  \\\\
As we have seen (previous subsection) the family of $\K$-algebras $(\FL_i[X]/(X^n))_{i\in I,n \in \N}$, where $(\FL_i)_{i\in I}$ is a family of non isomorphic finite field extensions of $\K$ $($not isomophic as $\K$-algebras$)$, is a family of non isomorphic $\K$-algebras having each exactly two idempotents ($1$ and $0$). Thus, the results of sections \ref{s2} apply. So an automorphism of $A_{\underline{n}_{m_1},\dots, \underline{n}_{m_k}}(\FL_1,\dots ,\FL_k)$ permutes the isomorphic factors (see proposition \ref{per} ) and we have by proposition \ref{wr} the natural decomposition :

\begin{theorem} 
The group of $\K$ algebra automorphisms of $A_{\underline{n}_{m_1},\dots, \underline{n}_{m_k}}(\FL_1,\dots ,\FL_k)$ is naturally isomorphic to :$$\overset{k}{\underset{i=1}{\prod}} \overset{m_i}{\underset{j=1}{\prod}}Aut_\K(\FL_i[X]/(X^j))\wr \mathfrak{S}_{n_{j,i}},$$
where $Aut_\K(-)$ is for the $\K$-algebra automorphism group and $(n_{1,i},\dots,n_{m_i,i})=\underline{n}_{m_i}$.
\end{theorem}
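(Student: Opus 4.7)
The plan is to reduce the statement to a direct invocation of proposition \ref{wr}, applied to the family of $\K$-algebras that appear as factors of $A=A_{\underline{n}_{m_1},\dots,\underline{n}_{m_k}}(\FL_1,\dots,\FL_k)$. First I would unpack the definition of $A$: by construction,
\[
A=\prod_{i=1}^{k}O_{\underline{n}_{m_i}}(\FL_i)=\prod_{i=1}^{k}\prod_{j=1}^{m_i}(\FL_i[X]/(X^j))^{n_{j,i}},
\]
where $(n_{1,i},\dots,n_{m_i,i})=\underline{n}_{m_i}$. So $A$ is, up to reordering of factors, a product of powers of the $\K$-algebras in the set $\mathcal{F}=\{\FL_i[X]/(X^j)\mid 1\le i\le k,\ 1\le j\le m_i\}$.

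Next I would verify that $\mathcal{F}$ satisfies the standing hypotheses of sections \ref{s1} and \ref{s2}, namely that its members are pairwise non-isomorphic as $\K$-algebras and that each has exactly the two idempotents $0$ and $1$. Both facts are contained in proposition \ref{id}: part $1$ gives the two-idempotent property for each $\FL_i[X]/(X^j)$, and part $2$ asserts that $\FL_i[X]/(X^j)\simeq\FL_{i'}[X]/(X^{j'})$ as $\K$-algebras if and only if $\FL_i\simeq\FL_{i'}$ as $\K$-algebras and $j=j'$; combined with the hypothesis that $\FL_1,\dots,\FL_k$ are pairwise non-isomorphic $\K$-algebras, this rules out isomorphisms between distinct elements of $\mathcal{F}$.

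With these hypotheses in place, proposition \ref{wr} applies directly to $A$ and yields the natural isomorphism
\[
Aut_\K(A)\simeq\prod_{i=1}^{k}\prod_{j=1}^{m_i}Aut_\K(\FL_i[X]/(X^j))\wr\mathfrak{S}_{n_{j,i}},
\]
which is the conclusion sought. The only non-automatic step is bookkeeping: one must check that the single product over the indexing set of $\mathcal{F}$ produced by proposition \ref{wr} can be rewritten as the announced double product, with each $\mathfrak{S}_{n_{j,i}}$ permuting exactly the $n_{j,i}$ copies of $\FL_i[X]/(X^j)$ sitting inside $O_{\underline{n}_{m_i}}(\FL_i)\subset A$. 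I do not expect any genuine obstacle here, since both the hard analytic input (the two-idempotent characterization of the factors) and the group-theoretic input (the wreath product decomposition) are already established; the present theorem is their straightforward combination.
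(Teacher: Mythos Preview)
Your proposal is correct and matches the paper's own argument essentially verbatim: the paper also invokes proposition~\ref{id} to verify that the $\FL_i[X]/(X^j)$ form a family of pairwise non-isomorphic $\K$-algebras with only the idempotents $0$ and $1$, and then applies proposition~\ref{wr} directly. Your bookkeeping remark about the double product is a harmless elaboration of what the paper leaves implicit.
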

We will now study the $\K$-algebra automorphism group $Aut_\K(\FL[X]/(X^n))$ of $\FL[X]/(X^n)$ for $\FL$ a finite field extension of $\K$. We first consider the $\FL$-algebra automorphism group $Aut_\FL(\FL[X]/(X^n))$ of $\FL[X]/(X^n)$.
\subsubsection{The group $Aut_\FL(\FL[X]/(X^n))$}In the following $\FL$ can be taken to be any field.
\begin{proposition}\label{O}
\begin{itemize}
\item[1)] If $f \in Aut_\FL(\FL[X]/(X^n))$ then $f(X)=\lambda_1 X + \cdots + \lambda_{n-1} X^{n-1},$ for a given $(\lambda_1,\dots,\lambda_{n-1}) \in \FL^*\times \FL^{n-2}$, where $\FL^*$ denotes the invertibles of $\FL$.
\item[2)] For $(\lambda_1,\dots,\lambda_{n-1}) \in \FL^*\times \FL^{n-2}$ there exist a unique automorphism  $f \in Aut_\FL(\FL[X]/(X^n))$ such that $f(X)$ is as in 1) of this proposition. 
\end{itemize}
\end{proposition}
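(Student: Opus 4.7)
The plan is to exploit the universal property of $\FL[X]/(X^n)$ as a quotient of the polynomial ring: an $\FL$-algebra endomorphism $f$ of $\FL[X]/(X^n)$ is determined uniquely by the image $f(X)$, and an assignment $X\mapsto g$ extends to a well-defined $\FL$-algebra map if and only if $g^n=0$ in $\FL[X]/(X^n)$. This immediately yields the uniqueness claim in 2), and reduces both statements to identifying exactly which $g$ can occur as $f(X)$ for $f$ an automorphism.

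For the necessity in part 1), I would first show that the constant term of $f(X)$ vanishes. Writing $f(X)=\lambda_0+\lambda_1 X+\cdots+\lambda_{n-1}X^{n-1}$, the element $f(X)$ is nilpotent (as the image of the nilpotent $X$), and the nilpotent elements of the local ring $\FL[X]/(X^n)$ are precisely those with zero constant term, namely the maximal ideal $(X)$. Hence $\lambda_0=0$. To force $\lambda_1\in\FL^\ast$, I would run a nilpotency-index argument: if $\lambda_1=0$ then $f(X)\in (X^2)$, so $f(X)^k\in (X^{2k})$ vanishes whenever $2k\geq n$; the image of $f$ is the $\FL$-subalgebra $\FL[f(X)]$, which is then $\FL$-spanned by $\{1,f(X),\dots,f(X)^{\lceil n/2\rceil-1}\}$ and so has dimension strictly less than $n$, contradicting surjectivity.

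For part 2), given $(\lambda_1,\dots,\lambda_{n-1})\in\FL^\ast\times\FL^{n-2}$, I set $Y=\lambda_1 X+\cdots+\lambda_{n-1}X^{n-1}$ and define $f$ by $X\mapsto Y$. The relation $Y^n=0$ holds because $Y\in (X)$ in $\FL[X]/(X^n)$, so $f$ is a well-defined $\FL$-algebra endomorphism. It remains to check that $f$ is bijective. The key observation is that, because $\lambda_1\neq 0$, each power $Y^k$ (for $0\leq k\leq n-1$) expands as $\lambda_1^k X^k + (\text{higher order terms})$; consequently the transition matrix from the basis $(1,X,\dots,X^{n-1})$ to the family $(1,Y,\dots,Y^{n-1})$ is lower triangular with diagonal entries $1,\lambda_1,\lambda_1^2,\dots,\lambda_1^{n-1}\in\FL^\ast$, so the latter family is itself an $\FL$-basis. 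Thus $f$ is surjective, and as an $\FL$-linear endomorphism of a finite-dimensional space it is automatically bijective.

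The main (and fairly mild) obstacle is formalising the triangularity step cleanly; once one identifies that the $X$-adic valuation of $Y^k$ in $\FL[X]/(X^n)$ is exactly $k$, both the surjectivity in 2) and the dimension-count contradiction in 1) drop out. No input beyond Proposition \ref{id}'s local-ring viewpoint on $\FL[X]/(X^n)$ is needed.
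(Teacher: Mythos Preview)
Your proposal is correct and follows essentially the same approach as the paper: both use the universal property of $\FL[X]/(X^n)$ to reduce to analysing $f(X)$, kill $\lambda_0$ via nilpotency of $f(X)$, rule out $\lambda_1=0$ by a nilpotency-index/dimension count, and establish bijectivity in 2) via the lower-triangular transition matrix with diagonal $1,\lambda_1,\dots,\lambda_1^{n-1}$. The only cosmetic difference is that you phrase the $\lambda_0=0$ step through the local-ring description of the nilradical and contradict surjectivity rather than injectivity for the $\lambda_1\neq 0$ step; the underlying computations are identical.
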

\begin{proof}
Let $f$ be a $\FL$-algebra automorphism of $\FL[X]/(X^n)$.  $f(X)$ decomposes in the basis $(1,X,\dots,X^{n-1})$ as a sum :
$$f(X)=\lambda_0 +\lambda_1 X + \cdots + \lambda_{n-1} X^{n-1},$$ for $(\lambda_0, \lambda_1,\dots,\lambda_{n-1}) \in  \FL^n$. One checks that the condition $f(X^n)=f(X)^n=0$ is equivalent to $\lambda_0=0$. The condition $\lambda_1=0$ implies that $f(X^k)=0$ for a $k< n$ but $f$ is an automorphism and hence $\lambda_1\neq 0$. We have prooved 1). Now let $v=\alpha_1 X+ \cdots \alpha_{n-1} X^{n-1}$  with $(\alpha_1,\dots, \alpha_{n-1})\in\FL^* \times \FL^{n-2}$. Since $v$ has no constant term ("$\alpha_0=0$"), $v^n=0$ and hence there is a unique $\FL$-algebra morphism $f_v :  \FL[X]/(X^n) \to \FL[X]/(X^n)$ such that $f_v(X)=v$. The condition $\alpha_1 \in \FL^*$ insures that $f$ is invertible. Indeed $f_v(1)=1$ and $f_v(v^k)=\lambda_1^kX+\text{terms of higher degree in $X$}$, and hence the determinant of the matrix of $f_v$ in the basis $(1,X,\dots,X^{n-1})$ is a power of the invertible $\alpha_1$. We have prooved 2).
\end{proof}

For $(\lambda_1,\dots,\lambda_{n-1}) \in \FL^*\times \FL^{n-2}$, and $i,j\in\{1,\dots,n-1\}$ we set :
\begin{equation}\label{alpha}
\alpha_{ij}(\lambda_1,\dots,\lambda_{n-1})= \underset{i_1+\cdots+ i_j=i \vert i_l >0}{\sum} \lambda_{i_1}\cdots \lambda_{i_j}
\end{equation}

and we define the $(n-1) \times (n-1)$ matrix with coefficient in $\FL$ : $$A(\lambda_1,\dots,\lambda_{n-1})=(\alpha_{ij}(\lambda_1,\dots,\lambda_{n-1}))_{i,j}.$$ The matrix $A(\lambda_1,\dots,\lambda_{n-1})$ is lower triangular and looks like 
 \[\begin{bmatrix}
    \lambda_1 & 0   & \dots& \dots &  0 \\
      \lambda_2 & \lambda_1^2   &  0 &\dots  &0 \\
    \vdots & \vdots &  \ddots & \ddots &\vdots \\
\lambda_{n-2} &  \dot & \dots  &\ddots &0\\
    \lambda_{n-1} &   \dot & \dots  & &\lambda_1^{n-1}
\end{bmatrix}
\]
with coefficients that are polynomial in the coefficients $\lambda_1,\dots,\lambda_{n-1}$ of the first column. \\
Let $G_n(\FL)$ be the subset of lower triangular invertible matrix of dimension $(n-1)\times (n-1)$ with coefficient in $\FL$ defined by :
$$G_n(\FL)=\{A(\lambda_1,\dots,\lambda_{n-1}) \mid (\lambda_1,\dots,\lambda_{n-1}) \in \FL^*\times\FL^{n-2}\}.$$
\begin{proposition}\label{Gnp}
Let $\mathbf{B}$ be the basis $(1, X, \dots, X^{n-1})$ and $f$ be an element of $Aut_\FL(\FL[X]/(X^n))$ such that $$f(X)=\lambda_1 X + \cdots + \lambda_{n-1} X^{n-1},$$ for some $(\lambda_1,\dots,\lambda_{n-1}) \in \FL^*\times \FL^{n-2}$. The Matrix of $f$ in $\mathbf{B}$ is the matrix :
 \[\begin{bmatrix}
   1 &  0   & \cdots& &  0 \\
      0 & \  &    &   & \\
    \vdots &  &   A(\lambda_1,\dots,\lambda_{n-1})    &  \\
0 &  &   &   & 
\end{bmatrix}
\]

\end{proposition}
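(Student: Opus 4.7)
The plan is to compute $f(X^j)$ for $j=0,1,\dots,n-1$ directly and read off the columns of the matrix of $f$ in $\mathbf{B}$. Since $f$ is a $\FL$-algebra morphism, $f(1)=1$, which immediately produces the first column $(1,0,\dots,0)^T$ and gives the top row/first column shape displayed in the statement. For $j\geq 1$ I would use $f(X^j)=f(X)^j$ and expand multinomially, working modulo $X^n$:
\[
\bigl(\lambda_1 X + \lambda_2 X^2 + \cdots + \lambda_{n-1} X^{n-1}\bigr)^j
= \sum_{k_1,\dots,k_j \geq 1} \lambda_{k_1}\cdots\lambda_{k_j}\, X^{k_1+\cdots+k_j}.
\]
Grouping by the power of $X$, the coefficient of $X^i$ in $f(X^j)$ is exactly
\[
\sum_{\substack{k_1+\cdots+k_j = i \\ k_l \geq 1}} \lambda_{k_1}\cdots\lambda_{k_j} = \alpha_{ij}(\lambda_1,\dots,\lambda_{n-1}),
\]
which is the definition \eqref{alpha}.

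Next, I would observe that the constraint $k_l\geq 1$ for each $l$ forces $i=k_1+\cdots+k_j \geq j$. Two consequences fall out immediately: $f(X^j)$ has no constant term (so the top entry of every later column is $0$, consistent with the claimed block form), and $\alpha_{ij}=0$ whenever $i<j$, giving the lower triangular shape of $A(\lambda_1,\dots,\lambda_{n-1})$. As sanity checks against the pictorial display in the paper: the only $j$-tuple summing to $j$ with each $k_l\geq 1$ is $(1,\dots,1)$, so $\alpha_{jj}=\lambda_1^j$, which recovers the diagonal $\lambda_1,\lambda_1^2,\dots,\lambda_1^{n-1}$; and for $j=1$ the sum collapses to $\alpha_{i1}=\lambda_i$, which recovers the first column of the tail.

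Finally I would assemble the columns. The image of $X^j$ for $j\geq 1$ contributes the column vector $(0,\alpha_{1j},\alpha_{2j},\dots,\alpha_{n-1,j})^T$; together with the column $(1,0,\dots,0)^T$ coming from $f(1)=1$, this is exactly the $n\times n$ block matrix in the statement. There is no serious obstacle: the proof is essentially the content of the definition of the $\alpha_{ij}$, and the only care required is a bookkeeping check that the indexing $i,j\in\{1,\dots,n-1\}$ of $A(\lambda_1,\dots,\lambda_{n-1})$ correctly labels the rows and columns of the bottom-right $(n-1)\times(n-1)$ block relative to the basis $\mathbf{B}=(1,X,\dots,X^{n-1})$.
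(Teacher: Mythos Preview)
Your proof is correct and follows essentially the same approach as the paper: compute $f(1)=1$ for the first column, expand $f(X^j)=f(X)^j$ for $j\geq 1$ and identify the coefficient of $X^i$ as $\alpha_{ij}(\lambda_1,\dots,\lambda_{n-1})$. The additional sanity checks you include (lower triangularity, diagonal entries, first column of $A$) are not in the paper's proof but are harmless and correct.
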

 \begin{proof}
The $j$-th column of the matrix of $f$ is determined by $f(X^j)$. For $j=0$, $f(1)=1$ and hence the first column is $$\begin{bmatrix}
   1  \\
      0  \\
    \vdots   \\
0 
\end{bmatrix}.$$ Now for $j>0$, $$f(X^j)=(\lambda_1 X + \cdots + \lambda_{n-1} X^{n-1})^j=\overset{n-1}{\underset{i=1}{\sum}} \alpha_{ij}(\lambda_1,\dots,\lambda_{n-1}) X^i, $$where $\alpha_{ij}(\lambda_1,\dots,\lambda_{n-1}) $ is defined in equation $(\ref{alpha})$. Hence, the $j$-th column is $$\begin{bmatrix}
   0  \\
      \alpha_{1,j}(\lambda_1,\dots,\lambda_{n-1})  \\
    \vdots   \\
 \alpha_{n-1,j}(\lambda_1,\dots,\lambda_{n-1}) 
\end{bmatrix}.$$
This proves the proposition.
\end{proof}
Combing the last proposition with the proposition before we get the corollary.

\begin{corollary}\label{Gn}
$G_n(\FL)$ is a group under matrix multiplication and its naturally isomorphic to $Aut_\FL(\FL[X]/(X^n))$. The isomorphism is griven by $f\mapsto A(\lambda_1,\dots,\lambda_{n-1})$ where $f(X)=\lambda_1X+\cdots+ \lambda_{n-1} X^{n-1}$.
\end{corollary}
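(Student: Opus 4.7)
The plan is to assemble the corollary from the two preceding propositions, using the classical fact that the matrix-representation map $f\mapsto M_f$ in a fixed basis is a group homomorphism from $Aut_\FL(\FL[X]/(X^n))$ into $GL_n(\FL)$.

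First I would record the map
\[
\Lambda : Aut_\FL(\FL[X]/(X^n)) \longrightarrow GL_{n-1}(\FL), \qquad f \longmapsto A(\lambda_1,\dots,\lambda_{n-1}),
\]
where $f(X)=\lambda_1 X+\cdots+\lambda_{n-1}X^{n-1}$ (well-defined by part 1 of Proposition \ref{O}). By Proposition \ref{Gnp}, the matrix $M_f$ of $f$ in the basis $\mathbf{B}=(1,X,\dots,X^{n-1})$ is the block diagonal matrix with blocks $(1)$ and $A(\lambda_1,\dots,\lambda_{n-1})$, so $\Lambda$ is exactly the composition of $f\mapsto M_f$ with the projection onto the lower-right $(n-1)\times(n-1)$ block.

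Next I would verify that $\Lambda$ is a group homomorphism. Since $M_{f\circ g}=M_f M_g$ for every $f,g\in Aut_\FL(\FL[X]/(X^n))$, and since the product of two block diagonal matrices of the above shape is again block diagonal of the same shape with the lower-right block being the product of the lower-right blocks, one reads off $\Lambda(f\circ g)=\Lambda(f)\Lambda(g)$. Injectivity of $\Lambda$ follows directly from part 2 of Proposition \ref{O}: the tuple $(\lambda_1,\dots,\lambda_{n-1})$, and hence the matrix $A(\lambda_1,\dots,\lambda_{n-1})$, determines $f$ uniquely. By definition $G_n(\FL)$ is exactly the image of $\Lambda$, so $\Lambda$ is a bijection onto $G_n(\FL)$.

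The only remaining point is that $G_n(\FL)$ is a group. But this is now automatic: being the image of the group $Aut_\FL(\FL[X]/(X^n))$ under the group homomorphism $\Lambda$ into $GL_{n-1}(\FL)$, the set $G_n(\FL)$ is a subgroup of $GL_{n-1}(\FL)$ (closure under products and inverses is inherited), and $\Lambda$ is then a group isomorphism $Aut_\FL(\FL[X]/(X^n))\xrightarrow{\ \sim\ } G_n(\FL)$. I do not anticipate a genuine obstacle here; the only mildly delicate point is the bookkeeping of the block-diagonal multiplication, but this is immediate from Proposition \ref{Gnp} and requires no further computation with the polynomials $\alpha_{ij}$.
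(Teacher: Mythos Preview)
Your proposal is correct and follows exactly the approach the paper intends: the paper's own proof is the single sentence ``Combining the last proposition with the proposition before we get the corollary,'' and you have simply spelled out that combination in detail (matrix representation in the basis $\mathbf{B}$ gives a homomorphism, Proposition~\ref{Gnp} identifies its block form, Proposition~\ref{O} gives bijectivity onto $G_n(\FL)$).
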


There is a bijection  $\FL^*\times \FL^{n-2}\to G_n(\FL), (\lambda_1,\dots,\lambda_{n-1})\mapsto A(\lambda_1,\dots,\lambda_{n-1})$. Hence, the product over $G_n(\FL)$ can be transfered to give a group product $\star  $ over $\FL^*\times \FL^{n-2}$. The product $\star$ is given by : 

$$ (\lambda_1,\dots,\lambda_{n-1}) \star (\lambda_1',\dots,\lambda_{n-1}')=(\beta_1\dots,\beta_n) \quad \text{where} \quad \beta_k=\overset{k}{\underset{i=1}{\sum}} \alpha_{k,i}(\lambda_1,\dots,\lambda_{n-1}) \lambda_i',$$
and the $\alpha_{i,j}(\lambda_1,\dots,\lambda_{n-1})$ are the polynomials in $\lambda_1,\dots,\lambda_{n-1}$ defined by equation (\ref{alpha}).

\subsubsection{The group $Aut_\K(\FL[X]/(X^n))$}
In the following $\FL$ is a finite field extension of $\K$ (perfect). We will consider the $\K$-algebra automorphism group $Aut_\K(\FL[X]/(X^n))$ of $\FL[X]/(X^n)$. The group of $\K$-algebra automorphisms of $\FL$, (field automorphisms of $\FL$ fixing the elements of $\K$) will be denoted $Aut_\K(L)$
\begin{proposition}\label{sigma}
If $f\in Aut_\K(\FL[X]/(X^n))$ then there exist $\sigma_f \in Aut_\K(\FL)$ such that $f(a)=\sigma_f(a)$ for all $a\in \FL$. 
\end{proposition}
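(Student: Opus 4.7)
The plan is to exploit separability to force the image $f(\alpha)$ of a primitive generator of $\FL/\K$ to lie in the constant subring $\FL \subset \FL[X]/(X^n)$. Since $\K$ is perfect and $\FL/\K$ is finite, the extension is separable, and by the primitive element theorem I write $\FL = \K[\alpha]$ where $\alpha$ has irreducible separable minimal polynomial $P \in \K[Y]$, so $P'(\alpha) \neq 0$ in $\FL$. Once $f(\alpha) \in \FL$ is established, every $a \in \FL$ is a $\K$-polynomial in $\alpha$, whence $f(a) \in \FL$ as well; the restriction $\sigma_f := f|_\FL$ is then a $\K$-algebra endomorphism of $\FL$, hence a $\K$-algebra automorphism since $\FL$ is finite-dimensional over $\K$ and $f$ is injective.

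To show $f(\alpha) \in \FL$, I would write $f(\alpha) = a_0 + Y$ with $Y = a_1 X + \cdots + a_{n-1} X^{n-1}$ and $a_i \in \FL$, and use $P(f(\alpha)) = f(P(\alpha)) = 0$. Expanding $P(a_0 + Y)$ via the universal factorization
\[
P(T + Y) = P(T) + P'(T)\, Y + Y^2 S(T, Y), \qquad S \in \K[T, Y],
\]
valid in any characteristic since $P(T+Y) - P(T) - P'(T)Y$ vanishes to order two in $Y$, the coefficient of $X^0$ in $P(a_0 + Y) = 0$ yields $P(a_0) = 0$, so $a_0$ is a root of $P$ lying in $\FL$, and by separability $P'(a_0) \neq 0$. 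I would then argue by induction on $k \geq 1$: if $a_1 = \cdots = a_{k-1} = 0$, then $Y$ has $X$-valuation $\geq k$ and $Y^2$ has $X$-valuation $\geq 2k > k$, so the coefficient of $X^k$ in $P(a_0 + Y)$ reduces to $P'(a_0)\, a_k$. Vanishing of this coefficient together with $P'(a_0) \neq 0$ force $a_k = 0$, completing the induction and yielding $f(\alpha) = a_0 \in \FL$.

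The main obstacle is this inductive step, which requires the algebraic factorization above rather than a naive Taylor series (whose coefficients $P^{(j)}/j!$ need not be defined in positive characteristic) and careful tracking of $X$-valuations. Conceptually, the essential place where the hypothesis ``$\K$ perfect'' enters is precisely via the separability of $P$: this guarantees $P'(a_0) \neq 0$ and permits the cancellation deducing $a_k = 0$; without separability, $f(\alpha)$ could acquire nontrivial nilpotent parts and the proposition would fail.
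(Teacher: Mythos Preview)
Your proof is correct and follows essentially the same route as the paper's: both expand $P(\text{constant part}+\text{nilpotent part})$, read off $P(a_0)=0$ from the degree-zero term, and use separability of $P$ (from $\K$ perfect) to force the nilpotent part to vanish via $P'(a_0)\neq 0$. The only cosmetic differences are that the paper applies this argument to every $a\in\FL$ with its own minimal polynomial rather than reducing to a single primitive element, phrases the valuation step as a contradiction on the lowest nonzero $X$-term rather than an induction on $k$, and invokes $f^{-1}$ instead of finite-dimensionality to obtain surjectivity of $f|_\FL$.
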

\begin{proof}
Take $a\in \FL$ and let $P_a$ be its minimal polynomial over $\K$. Since $f$ is $\K$-algebra morphism $P_a(f(a))=0$. Now assume that $f(a)=b+cX^k+o(X^k)$, with $b,c \in \FL$, $c\neq 0$, $k<n$ and where we use an will use (in this proof) the notation $o(X^k)$ for a sum (eventually $0$) of terms with degree strictly greater then $k$ in $X$ with coefficients in $\FL$. We have  :
$$P_a(f(a))=P_a(b+cX^k+o(X^k))=P_a(b)+cP_a'(b)X^k+o(X^k)=0.$$
This implies that $P(b)=P'(b)=0$, wich is not possible since $\K$ is perfect. Hence, $f(a)$ lies necessarly in $\FL$ and we have $f(\FL)\subset \FL$. Considering $f^{-1}$ we find that $f(\FL)=\FL$ and the proposition follows. 
\end{proof}
\begin{proposition}
\begin{itemize}
\item[1)] For $\sigma \in Aut_\K(\FL)$ there exist a unique $\K$-algebra automorphism $\sigma^X$ of $\FL[X]/(X^n)$ given by $\sigma^X(a)=\sigma(a)$ for $a\in \FL$ and $\sigma^X(X)=X$.
\item[2)] The map $\Phi :Aut_\K(\FL)\to Aut_\K(\FL[X]/(X^n)), \sigma \mapsto \sigma^X$ is an injective group morphism. 
\end{itemize}
\end{proposition}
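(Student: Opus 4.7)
The plan is to construct $\sigma^X$ explicitly by letting $\sigma$ act coefficient-wise on polynomials. For part 1), given $\sigma \in Aut_\K(\FL)$, I would first define $\tilde\sigma : \FL[X]\to \FL[X]$ by $\tilde\sigma\bigl(\sum_i a_i X^i\bigr)=\sum_i \sigma(a_i) X^i$. Additivity is clear, multiplicativity reduces to the identity $\sigma(ab)=\sigma(a)\sigma(b)$ for $a,b\in\FL$, and $\K$-linearity holds because $\sigma$ fixes $\K$. Since $\widetilde{\sigma^{-1}}\circ \tilde\sigma=\text{id}$, the map $\tilde\sigma$ is a $\K$-algebra automorphism of $\FL[X]$. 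Because $\tilde\sigma(X^n)=\tilde\sigma(X)^n=X^n$, the ideal $(X^n)$ is $\tilde\sigma$-stable, and $\tilde\sigma$ descends to a $\K$-algebra automorphism $\sigma^X$ of $\FL[X]/(X^n)$ satisfying $\sigma^X(a)=\sigma(a)$ for $a\in\FL$ and $\sigma^X(X)=X$. Uniqueness is immediate: any $\K$-algebra morphism of $\FL[X]/(X^n)$ is determined by its values on the $\K$-algebra generating set $\FL\cup\{X\}$.

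For part 2), I verify that $\Phi$ is a group morphism by showing $(\sigma\tau)^X=\sigma^X\circ \tau^X$ for $\sigma,\tau\in Aut_\K(\FL)$. Both sides are $\K$-algebra morphisms of $\FL[X]/(X^n)$ that send $a\in\FL$ to $\sigma(\tau(a))$ and send $X$ to $X$; by the uniqueness statement of part 1), they coincide. Injectivity is then direct: if $\sigma^X=\text{id}$, then restricting to $\FL$ yields $\sigma=\text{id}_\FL$.

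The proposition is essentially a formality, so no substantial obstacle is expected; the only delicate point is checking that the coefficient-wise extension of $\sigma$ preserves the ideal $(X^n)$ so that it descends to the quotient, which is transparent from $\tilde\sigma(X^n)=\tilde\sigma(X)^n=X^n$.
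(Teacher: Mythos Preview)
Your proposal is correct and follows essentially the same route as the paper: lift $\sigma$ to a $\K$-algebra automorphism $\tilde\sigma$ of $\FL[X]$ acting coefficient-wise, note that $(X^n)$ is stable so $\tilde\sigma$ descends to $\sigma^X$, and invoke generation by $\FL\cup\{X\}$ for uniqueness. You spell out part 2) explicitly whereas the paper simply says it can be readily checked, but the argument is the same.
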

\begin{proof}
One has a unique $\K$-algebra isomorphism $\tilde{\sigma} :\FL[X]\to \FL[X]$ given by $\tilde{\sigma}(a)=\sigma(a)$ for $a\in \FL$ and $\tilde{\sigma}(X)=X$. The isomorphism $\tilde{\sigma}$ maps the ideal $(X^n)$ onto itself and hence induces a $\K$-algebra automorphism $\sigma^X$ of $\FL[X]/(X^n)$. The uniqueness of $\sigma^X$, follow from the fact that $\FL[X]/(X^n)$ is generated by $\FL$ and $X$. 2) can be readly checked.
\end{proof}
\begin{proposition}
The group $Aut_\K(\FL[X](/X^n))$ is naturally isomorphic to the semi-direct product $Aut_\FL(\FL[X]/(X^n) )\rtimes_\theta Aut_\K(\FL)$, where $\theta(\sigma)(f)=\sigma^Xf(\sigma^X)^{-1}$ for $\sigma  \in Aut_\K(\FL)$ and $f \in Aut_\FL(\FL[X]/(X^n))$ and where $\sigma^X$ is as in the previos proposition.
\end{proposition}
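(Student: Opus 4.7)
The plan is to exhibit the semidirect product structure via the standard short exact sequence
\[
1 \to Aut_\FL(\FL[X]/(X^n)) \to Aut_\K(\FL[X]/(X^n)) \xrightarrow{\pi} Aut_\K(\FL) \to 1,
\]
with $\Phi$ from the previous proposition as a section. First I would define $\pi$ by $\pi(f)=\sigma_f$, where $\sigma_f\in Aut_\K(\FL)$ is produced by Proposition \ref{sigma} (note $\sigma_f$ is uniquely determined since it is forced by the restriction of $f$ to $\FL\subset \FL[X]/(X^n)$). A direct computation on elements of $\FL$ shows $\sigma_{f\circ g}=\sigma_f\circ\sigma_g$, so $\pi$ is a group morphism. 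Its kernel is exactly the set of $\K$-algebra automorphisms fixing every element of $\FL$, namely $Aut_\FL(\FL[X]/(X^n))$. Surjectivity of $\pi$ and the splitting are immediate from the previous proposition: $\pi\circ\Phi=\mathrm{id}_{Aut_\K(\FL)}$ since $\sigma^X$ restricted to $\FL$ is $\sigma$.

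Once the split exact sequence is in place, I would explicitly construct the isomorphism
\[
\Psi:Aut_\FL(\FL[X]/(X^n))\rtimes_\theta Aut_\K(\FL)\longrightarrow Aut_\K(\FL[X]/(X^n)),\qquad (f,\sigma)\longmapsto f\circ\sigma^X,
\]
and verify it is a group morphism. The computation is
\[
(f_1\circ\sigma_1^X)\circ(f_2\circ\sigma_2^X)=f_1\circ\bigl(\sigma_1^X f_2 (\sigma_1^X)^{-1}\bigr)\circ \sigma_1^X\sigma_2^X = \bigl(f_1\circ\theta(\sigma_1)(f_2)\bigr)\circ(\sigma_1\sigma_2)^X,
\]
using that $\Phi$ is a morphism so $\sigma_1^X\sigma_2^X=(\sigma_1\sigma_2)^X$. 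This matches the semidirect product multiplication $(f_1,\sigma_1)(f_2,\sigma_2)=(f_1\,\theta(\sigma_1)(f_2),\sigma_1\sigma_2)$ by definition of $\theta$.

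For bijectivity: injectivity follows because $f\circ\sigma^X=\mathrm{id}$ evaluated at $a\in\FL$ gives $f(\sigma(a))=a$; since $f\in Aut_\FL$ fixes $\FL$ pointwise, $\sigma(a)=a$ for all $a\in\FL$, so $\sigma=\mathrm{id}$ and then $f=\mathrm{id}$. For surjectivity, given $g\in Aut_\K(\FL[X]/(X^n))$, set $\sigma:=\pi(g)=\sigma_g$ and $f:=g\circ(\sigma^X)^{-1}$; on $a\in\FL$ one has $f(a)=g(\sigma^{-1}(a))=\sigma_g(\sigma^{-1}(a))=a$, so $f\in Aut_\FL(\FL[X]/(X^n))$ and $g=\Psi(f,\sigma)$.

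The only non-routine point, and the main thing to check carefully, is that the formula $\theta(\sigma)(f)=\sigma^X f(\sigma^X)^{-1}$ really lands in $Aut_\FL(\FL[X]/(X^n))$, i.e.\ that conjugation by $\sigma^X$ preserves the subgroup of $\FL$-algebra automorphisms; but for $a\in\FL$ a short calculation gives $\sigma^X(f(\sigma^{-1}(a)))=\sigma^X(\sigma^{-1}(a))=a$, so this is automatic. Everything else is formal consequence of Proposition \ref{sigma} and the previous proposition about $\Phi$.
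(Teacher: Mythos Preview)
Your proof is correct and follows essentially the same approach as the paper. The paper phrases the argument via the internal semidirect product criteria (trivial intersection $Aut_\FL(\FL[X]/(X^n))\cap\Phi(Aut_\K(\FL))=\{1\}$, product equals the whole group, and normality via conjugation), while you package the same verifications through the split short exact sequence and the explicit map $\Psi$; the key computations---that $f\circ(\sigma_f^X)^{-1}$ fixes $\FL$ pointwise and that $\sigma^X f(\sigma^X)^{-1}$ does too---are identical in both.
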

\begin{proof}
$Aut_\FL(\FL[X]/(X^n))$ is a subgroup of $Aut_\K(\FL[X]/(X^n))$. By the previous proposition, the statement of the proposition is equivalent to the statement $Aut_\K(\FL[X]/X^n)$ splits as a semi-direct product of $Aut_\FL(\FL[X]/(X^n))$ and $\Phi(Aut_\K(\FL))$ ($\Phi$ of the previous proposition). We prove the later statement. First note that $Aut_\FL(\FL[X]/(X^n))\cap  \Phi(Aut_\K(\FL))=\{1\}$. By proposition \ref{sigma}, if $f\in Aut_\K(\FL[X]/(X^n))$ then there exist $\sigma_f \in Aut_\K(\FL)$ such that $f(a)=\sigma_f(a)$ for all $a\in \FL$. Notice that $f\circ (\sigma_f^X)^{-1}$ lies in $Aut_\FL(\FL[X]/(X^n))$. Indeed, $f\circ (\sigma_f^X)^{-1}(a)=\sigma_f(\sigma_f^{-1}(a))=a$ for $a\in \FL$. This proves that $Aut_\K(\FL[X]/(X^n))=Aut_\FL(\FL[X]/(X^n))\Phi (Aut_\K(\FL))$. Finally, for $\sigma  \in Aut_\K(\FL)$ and $f \in Aut_\K(\FL)$, $\sigma^Xf(\sigma^X)^{-1}$ restricts to the identity on $\FL$ and hence lies in $Aut_\FL(\FL[X]/(X^n))$. we have proved the proposition.
\end{proof}
We recall that by proposition \ref{O}, if $f\in Aut_\FL(\FL[X]/(X^n))$ then $f(X)= \lambda_1 X + \cdots + \lambda_{n-1} X^{n-1}$ for a given $(\lambda_1,\dots,\lambda_{n-1}) \in \FL^*\times \FL^{n-2}$.

\begin{proposition}
Take $f\in Aut_\FL(\FL[X]/(X^n))$. If $f(X)= \lambda_1 X + \cdots + \lambda_{n-1} X^{n-1}$ with $(\lambda_1,\dots,\lambda_{n-1}) \in \FL^*\times \FL^{n-2}$ then for $\sigma \in Aut_\K(\FL)$ and $\theta$ as in the previous proposition : 

$$ \theta(\sigma)(f)(X)=\sigma(\lambda_1)X+\cdots+\sigma(\lambda_{n-1}) X^{n-1}.$$
 
\end{proposition}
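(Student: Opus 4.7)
The plan is to evaluate $\theta(\sigma)(f)(X) = \sigma^X \circ f \circ (\sigma^X)^{-1}(X)$ directly, reading off each of the three composed maps from the defining properties established in the preceding propositions. The whole argument is very short; the work is in recording the right fact at each stage.

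The first step I would take is to check that $(\sigma^X)^{-1}(X) = X$. For this, observe that both $(\sigma^X)^{-1}$ and $(\sigma^{-1})^X$ are $\K$-algebra automorphisms of $\FL[X]/(X^n)$ which restrict to $\sigma^{-1}$ on $\FL$ and send $X$ to $X$. The uniqueness clause in the proposition defining $\sigma^X$ forces these two automorphisms to coincide, and in particular $(\sigma^X)^{-1}$ fixes $X$.

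The second step is to apply $f$, which by hypothesis gives $f(X) = \lambda_1 X + \cdots + \lambda_{n-1}X^{n-1}$. The third step is to apply $\sigma^X$ to this element. Since $\sigma^X$ is a ring morphism with $\sigma^X(X) = X$ and $\sigma^X|_{\FL} = \sigma$, we get termwise
$$\sigma^X(\lambda_i X^i) = \sigma^X(\lambda_i)\,\sigma^X(X)^i = \sigma(\lambda_i)\, X^i,$$
and summing from $i=1$ to $n-1$ produces exactly the claimed formula.

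There is no genuine obstacle here: the identity is essentially an instance of the naturality built into the construction of $\sigma^X$. The only point where one must pause is to justify the identification $(\sigma^X)^{-1} = (\sigma^{-1})^X$, and this is immediate from the uniqueness statement in the previous proposition.
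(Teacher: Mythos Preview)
Your proof is correct and follows exactly the same three-step computation as the paper: $(\sigma^X)^{-1}(X)=X$, then apply $f$, then apply $\sigma^X$ termwise. The only difference is that you justify $(\sigma^X)^{-1}(X)=X$ via the identity $(\sigma^X)^{-1}=(\sigma^{-1})^X$ and uniqueness, whereas the paper uses this fact without comment---indeed it follows immediately from $\sigma^X(X)=X$ and bijectivity of $\sigma^X$, so the uniqueness detour, while correct, is not needed.
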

\begin{proof}
The equation follows from the fact that \begin{align*}\theta(\sigma)(f)(X)&=\sigma^Xf(\sigma^X)^{-1}(X)=\sigma^Xf(X)\\&=\sigma^X(\lambda_1 X + \cdots + \lambda_{n-1} X^{n-1})\\&=\sigma(\lambda_1)X+\cdots+\sigma(\lambda_{n-1}) X^{n-1}.\end{align*}
\end{proof}
We recall that $$G_n(\FL)=\{A(\lambda_1,\dots,\lambda_{n-1}) \mid (\lambda_1,\dots,\lambda_{n-1}) \in \FL^*\times\FL^{n-2}\},$$
is a group under matrix multiplication (corollary \ref{Gn}).
\begin{theorem}
We have a group isomorphism between $Aut_\K(\FL[X]/(X^n))$ and the semi-direct product $G_n(\FL)\rtimes_\beta Aut_\K(\FL)$, where $\beta(\sigma)(A(\lambda_1,\dots,\lambda_{n-1}))=A(\sigma(\lambda_1),\dots,\sigma(\lambda_{n-1}))$; $A(\sigma(\lambda_1),\dots,\sigma(\lambda_{n-1}))$ is the matrix obtained form $A(\lambda_1,\dots,\lambda_{n-1})$ by applying $\sigma$ to the coefficients.
\end{theorem}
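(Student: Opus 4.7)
The plan is to observe that the theorem follows by chaining together two isomorphisms already established in the excerpt. The previous proposition gives the semi-direct product decomposition $Aut_\K(\FL[X]/(X^n)) \simeq Aut_\FL(\FL[X]/(X^n)) \rtimes_\theta Aut_\K(\FL)$, and Corollary \ref{Gn} provides an isomorphism $\Psi : Aut_\FL(\FL[X]/(X^n)) \to G_n(\FL)$ sending an $f$ with $f(X) = \lambda_1 X + \cdots + \lambda_{n-1} X^{n-1}$ to $A(\lambda_1,\dots,\lambda_{n-1})$. So the task reduces to transporting the semi-direct product structure across $\Psi$ and checking that the action $\theta$ on $Aut_\FL(\FL[X]/(X^n))$ becomes precisely the action $\beta$ on $G_n(\FL)$.

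Concretely, I would introduce the candidate map
$$\Xi : Aut_\FL(\FL[X]/(X^n)) \rtimes_\theta Aut_\K(\FL) \longrightarrow G_n(\FL) \rtimes_\beta Aut_\K(\FL), \qquad (f,\sigma) \longmapsto (\Psi(f),\sigma),$$
which is a bijection since $\Psi$ is. To show $\Xi$ is a group morphism, the only nontrivial verification is the compatibility $\Psi \circ \theta(\sigma) = \beta(\sigma) \circ \Psi$ for every $\sigma \in Aut_\K(\FL)$. If $f(X) = \lambda_1 X + \cdots + \lambda_{n-1} X^{n-1}$, the previous proposition computed $\theta(\sigma)(f)(X) = \sigma(\lambda_1) X + \cdots + \sigma(\lambda_{n-1}) X^{n-1}$, so by definition of $\Psi$ and of $\beta$,
$$\Psi(\theta(\sigma)(f)) = A(\sigma(\lambda_1),\dots,\sigma(\lambda_{n-1})) = \beta(\sigma)(A(\lambda_1,\dots,\lambda_{n-1})) = \beta(\sigma)(\Psi(f)),$$
which is the required compatibility. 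Composing the isomorphism of the previous proposition with $\Xi$ then produces the desired isomorphism $Aut_\K(\FL[X]/(X^n)) \simeq G_n(\FL) \rtimes_\beta Aut_\K(\FL)$.

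There is no substantial obstacle here: both ingredients — the semi-direct product splitting and the matrix parameterization of $Aut_\FL(\FL[X]/(X^n))$ by $G_n(\FL)$ — are already in hand, and the translation from $\theta$ to $\beta$ is a direct reading of the formula for $\theta(\sigma)(f)(X)$ established in the previous proposition. The only subtlety worth being careful about is separating the two semi-direct products cleanly so that the equality $\Psi \circ \theta(\sigma) = \beta(\sigma) \circ \Psi$ is stated at the level of abstract actions rather than elementwise, which makes the verification that $\Xi$ respects the twisted multiplication essentially automatic.
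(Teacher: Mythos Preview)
Your approach is essentially identical to the paper's: both combine Corollary~\ref{Gn} with the two preceding propositions (the splitting $Aut_\K(\FL[X]/(X^n)) \simeq Aut_\FL(\FL[X]/(X^n)) \rtimes_\theta Aut_\K(\FL)$ and the formula $\theta(\sigma)(f)(X)=\sigma(\lambda_1)X+\cdots+\sigma(\lambda_{n-1})X^{n-1}$) to transport $\theta$ to $\beta$ via the map $f\mapsto A(\lambda_1,\dots,\lambda_{n-1})$. Your explicit introduction of $\Xi$ and the equivariance check $\Psi\circ\theta(\sigma)=\beta(\sigma)\circ\Psi$ just spells out what the paper compresses into one sentence.

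One small omission: the theorem statement contains a second assertion, namely that $A(\sigma(\lambda_1),\dots,\sigma(\lambda_{n-1}))$ coincides with the matrix obtained from $A(\lambda_1,\dots,\lambda_{n-1})$ by applying $\sigma$ to every entry. You do not address this; the paper does, by observing that each entry $\alpha_{ij}(\lambda_1,\dots,\lambda_{n-1})=\sum_{i_1+\cdots+i_j=i,\;i_l>0}\lambda_{i_1}\cdots\lambda_{i_j}$ is a polynomial with integer coefficients in the $\lambda$'s, so that $\alpha_{ij}(\sigma(\lambda_1),\dots,\sigma(\lambda_{n-1}))=\sigma(\alpha_{ij}(\lambda_1,\dots,\lambda_{n-1}))$. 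This is a one-line check, but it is part of the statement and should be included.
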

\begin{proof}
By corollary \ref{Gn}, we have an isomorphism $ Aut_\FL(\FL[X]/(X^n)) \to G_n$ given by  $f\mapsto A(\lambda_1,\dots,\lambda_{n-1})$ where $f(X)=\lambda_1X+\cdots+ \lambda_{n-1} X^{n-1}$. Hence, by the last two propositions we get the first statement of the theorem. We recal that the coefficients of $A(\lambda_1,\dots,\lambda_{n-1})$ are the 
$$\alpha_{ij}(\lambda_1,\dots,\lambda_{n-1})= \underset{i_1+\cdots+ i_j=i \vert i_l >0}{\sum} \lambda_{i_1}\cdots \lambda_{i_j} .$$
Hence, the coefficients $\alpha_{ij}(\sigma(\lambda_1),\dots,\sigma(\lambda_{n-1}))$ of $A(\sigma(\lambda_1),\dots,\sigma(\lambda_{n-1}))$ are equal to $$\sigma(\alpha_{ij}(\lambda_1,\dots,\lambda_{n-1})).$$ This completes the proof of the theorem.
\end{proof}

 \end{document}